\newtheorem{theorem}{Theorem}
\newtheorem{lemma}{Lemma}
\newtheorem{proposition}{Proposition}
\theoremstyle{definition}
\newtheorem{definition}{Definition}
\newtheorem{example}{Example}
\newtheorem{remark}{Remark}
\newcommand{\app}{\mathrm{app}}
\newcommand{\A}{\mathcal{A}}
\newcommand{\D}{\mathcal{D}}
\newcommand{\R}{\mathbb{R}}
\newcommand{\Ind}{\mathbf{1}}
\newcommand{\N}{\mathbb{N}}
\newcommand{\I}{\mathcal{I}}
\newcommand{\thd}{\eta}
\newcommand{\E}[2][n]{\mathbb{E}\SwitchBracketsizeLeft{#1}\LeftBracketSize[#2\SwitchBracketsizeRight{#1}\RightBracketSize]}
\newcommand{\Prob}[2][n]{\mathbb{P}\SwitchBracketsizeLeft{#1}\LeftBracketSize\{#2\SwitchBracketsizeRight{#1}\RightBracketSize\}}
\newcommand{\argmin}{\mathrm{argmin}}
\DeclareMathOperator{\BV}{BV}
\DeclareMathOperator{\TV}{TV}
\newcommand{\abs}[2][n]{\SwitchBracketsizeLeft{#1}\LeftBracketSize\lvert#2\SwitchBracketsizeRight{#1}\RightBracketSize\rvert}
\newcommand{\norm}[2][n]{\SwitchBracketsizeLeft{#1}\LeftBracketSize\lVert#2\SwitchBracketsizeRight{#1}\RightBracketSize\rVert}
\newcommand{\set}[3][a]{\SwitchBracketsizeLeft{#1}\LeftBracketSize\{#2 : #3\SwitchBracketsizeRight{#1}\RightBracketSize\}}
\newcommand{\NextScriptStyle}[1]{{\scriptstyle{#1}}}
\newcommand{\NextScriptScriptStyle}[1]{{\scriptscriptstyle{#1}}}
\newcommand{\NextTextStyle}[1]{{\textstyle{#1}}}
\newcommand{\NextDisplayStyle}[1]{{\displaystyle{#1}}}
\newcommand{\SwitchBracketsizeLeft}[1]{
  \ifthenelse{\equal{#1}{b}\OR\equal{#1}{big}}{\let\LeftBracketSize=\bigl}{
    \ifthenelse{\equal{#1}{B}\OR\equal{#1}{Big}}{\let\LeftBracketSize=\Bigl}{
      \ifthenelse{\equal{#1}{g}\OR\equal{#1}{bigg}}{\let\LeftBracketSize=\biggl}{
    \ifthenelse{\equal{#1}{G}\OR\equal{#1}{Bigg}}{\let\LeftBracketSize=\Biggl}{
      \ifthenelse{\equal{#1}{s}\OR\equal{#1}{small}}{\let\LeftBracketSize=\NextScriptStyle}{
        \ifthenelse{\equal{#1}{ss}}{\let\LeftBracketSize=\NextScriptScriptStyle}{
          \ifthenelse{\equal{#1}{t}\OR\equal{#1}{text}}{\let\LeftBracketSize=\NextTextStyle}{
        \ifthenelse{\equal{#1}{d}\OR\equal{#1}{display}}{\let\LeftBracketSize=\NextDisplayStyle}{
          \ifthenelse{\equal{#1}{a}\OR\equal{#1}{auto}}{\let\LeftBracketSize=\left}{
            \let\LeftBracketSize=\relax}}}}}}}}}}
\newcommand{\SwitchBracketsizeRight}[1]{
  \ifthenelse{\equal{#1}{b}\OR\equal{#1}{big}}{\let\RightBracketSize=\bigr}{
    \ifthenelse{\equal{#1}{B}\OR\equal{#1}{Big}}{\let\RightBracketSize=\Bigr}{
      \ifthenelse{\equal{#1}{g}\OR\equal{#1}{bigg}}{\let\RightBracketSize=\biggr}{
    \ifthenelse{\equal{#1}{G}\OR\equal{#1}{Bigg}}{\let\RightBracketSize=\Biggr}{
      \ifthenelse{\equal{#1}{s}\OR\equal{#1}{small}}{\let\RightBracketSize=\NextScriptStyle}{
        \ifthenelse{\equal{#1}{ss}}{\let\RightBracketSize=\NextScriptScriptStyle}{
          \ifthenelse{\equal{#1}{t}\OR\equal{#1}{text}}{\let\RightBracketSize=\NextTextStyle}{
        \ifthenelse{\equal{#1}{d}\OR\equal{#1}{display}}{\let\RightBracketSize=\NextDisplayStyle}{
          \ifthenelse{\equal{#1}{a}\OR\equal{#1}{auto}}{\let\RightBracketSize=\right}{
            \let\RightBracketSize=\relax}}}}}}}}}}
\title{Multiscale Change-point Segmentation: Beyond Step Functions}
\author{Housen Li, Qinghai Guo, and Axel Munk}
\begin{document}

\setstretch{1.2}

\maketitle

\begin{abstract}
Modern multiscale type segmentation methods are known to detect multiple change-points with high statistical accuracy, while allowing for fast computation. Underpinning theory has been developed mainly for models that assume the signal as a piecewise constant function. In this paper this will be extended to certain function classes beyond such step functions in a nonparametric regression setting, revealing certain multiscale segmentation methods as robust to deviation from such piecewise constant functions. Our main finding is the adaptation  over such function classes for a universal thresholding, which includes  bounded variation functions, and (piecewise) H\"{o}lder functions of smoothness order $ 0 < \alpha \le1$ as special cases. From this we derive statistical guarantees on feature detection in terms of jumps and modes. Another key finding is that these multiscale segmentation methods perform nearly (up to a log-factor) as well as the oracle piecewise constant segmentation estimator (with known jump locations), and the best piecewise constant approximants of the (unknown) true signal. Theoretical findings are examined by various numerical simulations. 
\end{abstract}

\noindent {\it MSC 2010 subject classifications:}
62G08, 62G20, 62G35.

\noindent {\it Key words and phrases:}
{Change-point regression,} adaptive estimation, oracle inequality, jump detection,  model misspecification, multiscale inference, {approximation spaces}, {robustness.}

\section{Introduction}\label{s:intro}

Throughout we assume that observations are given through the regression model
\begin{equation}\label{eq:model}
y_{i}^{n} = \bar f_i^n + \xi_{i}^n,\qquad i = 0, \ldots, n-1,
\end{equation}
where $\bar f_i^n= n\int_{[i/n,(i+1)/n)} f(x) dx$, and $\xi^n = (\xi_{0}^{n},\ldots,\xi_{n-1}^{n})$ are independent {(not necessarily i.i.d.)} centered sub-Gaussian random variables with scale parameter $\sigma$, that is,
\begin{equation*}
\E[a]{e^{u\xi_{i}^{n}}} \le e^{u^2\sigma^2/2},\qquad \text{ for every } u \in \R.
\end{equation*}
For simplicity, the scale parameter (i.e.~noise level in the Gaussian case) $\sigma$ in model~\eqref{eq:model} is assumed to be known, as it can be easily pre-estimated {$\sqrt{n}$-consistently from the data}, {which does not affect our results}, see e.g.~\citet{DM98,ACLZ14,DeWi16,TGM17}. 

{In this paper we are concerned with potentially discontinuous signals {$f: [0,1)\to \R$} in~\eqref{eq:model}. {As a minimal condition,} we always} assume that the underlying (unknown) signal $f$ lies in $\D \equiv \D([0,1))$, the space of c\`{a}dl\`{a}g functions on $[0,1)$, which are right-continuous and have left-sided limits~\citep[cf.][Chapter 3]{Bil99}. In~\eqref{eq:model}, we embed for simplicity the sampling points $x_{i,n}=i/n$ equidistantly in the unit interval. {However, we stress that all our results can be transferred to more general {domains ($\subseteq\R$) and} sampling schemes, also for random $x_{i,n}$.} 

{For} the particular case that $f$ is piecewise constant {with a finite but unknown number of jumps}, model~\eqref{eq:model} {has been of particular interest throughout the past and} is often referred to as \emph{change-point regression model}. {The} related problem of estimating the number, locations and sizes of change-points {(i.e.~its {locations of} discontinuity)} has a long and rich history in {the statistical} literature, {see e.g.~\cite{IbrHas81} or~\cite{KorKor11} for some selective textbook references on statistical efficient estimation of a \emph{single} change-point.} {\cite{Tuk61} already phrased the problem of segmenting a data sequence into constant pieces as {the} ``regressogram problem'' and it occurs in a plenitude of applications.} {From a risk minimization point of view it is well known that certain Bayesian estimators are (asymptotically) optimal \citep{IbrHas81}; however, they are not feasible from a computational point of view, particularly when it comes to multiple change-point recovery.} Therefore, recent years have witnessed a renaissance in change-point inference motivated by several applications which require {\it computationally fast} and {\it statistically efficient} finding of potentially \emph{many} change-points {in large data sets}, see e.g.~\cite{OlsVenLucWig04}, \cite{Sie13} and~\cite{BHM16} for its relevance to {cancer} genetics, \cite{HaoZha15} for network analysis, {\cite{ACLZ14} for econometrics,} and~\cite{Hot13} for electrophysiology, {to name a few.} This challenges {statistical} methodology due to the multiscale nature {of these problems} (i.e.~change-points occur at different {e.g.~temporal} scales) {due to a potentially large number of change-points,} and due to the large number of data points (a few millions or more), {requiring computationally efficient methods}. {Furthermore, it is of great practical relevance to have} change-point segmentation methods that provide statistical certificates of evidence for {its} findings, such as uniform confidence sets for the change-point locations or jump sizes~\citep{FriMunSie14,LMS16}. 

Computationally efficient segmentation methods which provide at the same hand certain statistical guarantees have been lately proposed, which are either based on dynamic programming~\citep{BoyKemLieMunWit09,KillFeaEck12,FriMunSie14, DKK15,LMS16,MaToGuPa16,HayEckFer16}, local search~\citep{ScoKno74,OlsVenLucWig04,Fry14} or convex optimization~\citep{HarLev08,TibWan08,HarLev10}. 

{Typically, the statistical justification {for all} the aforementioned methods is given for} models which assume that the underlying truth is a piecewise constant function with a fixed but unknown number of changes. For extensions to increasing number of changes of the truth (as the number of observations increases), see e.g.~\citep{ZhSie12,Fry14}, {or under an additional sparsity assumption~\citep{CJL12}.} However, in general, nothing is known for {such} segmentation methods in the general nonparametric regression setting as in~\eqref{eq:model} when~$f$ is {not a piecewise constant function,} e.g.~a smooth function. Notable exceptions are the jump-penalized least square estimator in~\citep{BoyKemLieMunWit09}, and the unbalanced Haar wavelets based estimator in~\citep{Fry07}, for which the $L^2$-risk has been analyzed for functions which can be sufficiently fast approximated by piecewise constant functions (in our notation this corresponds to functions in the space  $\A_2^\gamma$, see section~\ref{ss:approx} for the definition).

Intending to fill such a gap, we provide a comprehensive risk analysis for a range of multiscale change-point methods when $f$ in~\eqref{eq:model} is not a change-point function. To this end, we introduce {in a first step} a {general} class of \emph{multiscale change-point segmentation methods}, {with scales specified by general $c$-normal systems (adopted from~\citet{Nem85}, see Definition~\ref{df:cn}),} unifying several previous methods. {This includes} particularly the simultaneous multiscale change-point estimator (SMUCE), which is introduced by~\cite{FriMunSie14} via minimizing the number of change-points under a side constraint that is based on a simultaneous multiple testing procedure on all scales (length of subsequent observations), and related estimators which are built on different multiscale systems~\citep{Wal10}, {or penalties~\citep{LMS16}.} {These methods can be viewed also as a natural multiscale extension of certain jump penalized estimators via convex duality~\citep[see][]{BoyKemLieMunWit09,KillFeaEck12}.} {Implemented} by {accelerated} dynamic programming algorithms, {these methods often have a runtime $O(n \log n)$, and are} found empirically promising in various applications~\citep[see e.g.][]{Hot13,FutHotMunSie13,BehMun15,KillFeaEck12}. In case that $f$ in model~\eqref{eq:model} is a step function, the {statistical} theory {for these methods} is well-understood meanwhile. {For example, minimax optimality of estimating the change-point locations and sizes has been shown, which is based on} exponential deviation bounds on the number, and the locations of change-points. {Furthermore, these methods also obey} optimal {minimax} detection {properties} (in the sense of testing) of vanishing signals {as well}, {and provide honest simultaneous confidence statements for all unknown quantities}~\citep[see][]{FriMunSie14,LMS16,PSM16}. 

To complement the understanding of these methods, this work aims to analyze their behavior when the true regression function $f$ is beyond a piecewise constant function. To this end, we derive {a)} convergence rates for sequences of piecewise constant functions with increasing number of changes {(Theorem~\ref{th:step})}, and {b)} for functions in {certain} approximation spaces {(Theorem~\ref{th:approx})}, well-known in approximation theory, cf.~\citet{DeLo93}, {(see Section~\ref{s:theory})}, generalizing the above mentioned results results for quadratic risk to general $L^p$-risk ($0 < p < \infty$).   {As a consequence,} we obtain the {minimax} optimal rates $n^{-2/3\cdot\min\{1/2,1/p\}}$ and $n^{-2\alpha/(2\alpha+1)\cdot\min\{1/2,1/p\}}$ (up to a log-factor) in terms of $L^p$-loss both almost surely and in expectation for the cases that $f$ has bounded variation~{\citep[see][]{MamGee97}}, and that $f$ is (piecewise) H\"{o}lder continuous of order $0 < \alpha \le 1$, respectively. Most importantly, the discussed multiscale change-point segmentation methods are universal (i.e.~independent of the smoothness assumption of the unknown truth signal), as the only tuning parameter {(which can be thought of as a universal threshold)} can be chosen as $\thd \asymp \sqrt{\log n}$. {We will show that} for this choice, these methods automatically \emph{adapt} to the unknown ``smoothness'' of the underlying function in an optimal way, no matter whether it is piecewise constant or it lies in {the aforementioned} function spaces. As an illustration, we present the performance of SMUCE~\citep{FriMunSie14} with universal parameter choice $\thd = 0.42 \sqrt{\log n}$, on {different signals}  in Figure~\ref{over_fig}. It clearly shows that SMUCE, although {designed to provide a piecewise constant solution,} successfully recovers the shape of {all} underlying signals no matter whether they are locally constant or not, as suggested by our theoretical findings.

\begin{figure}[!ht]
\centering
\includegraphics[width=0.9\textwidth]{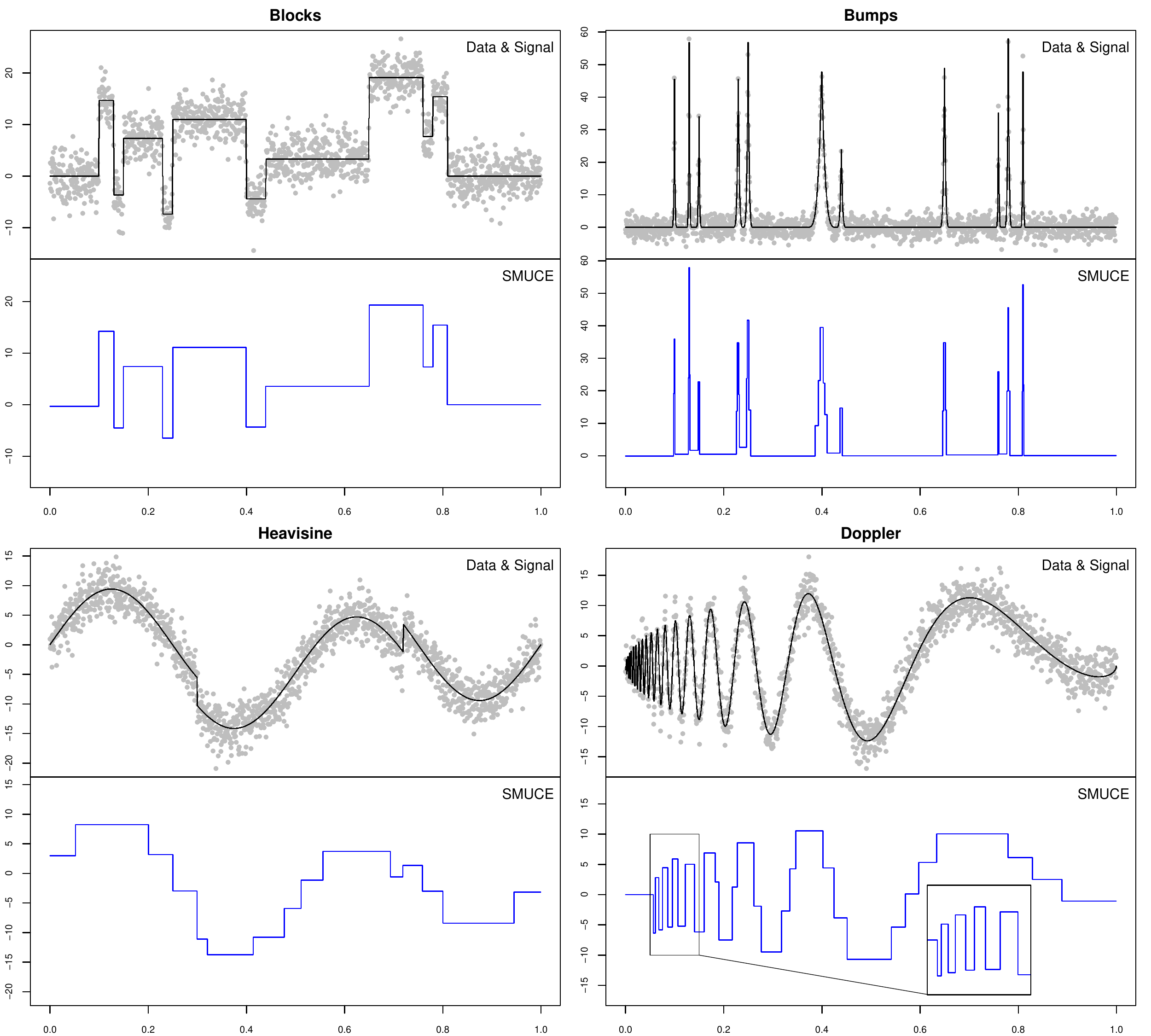}
\caption{Estimation by {the multiscale change-point segmentation method SMUCE} \citep{FriMunSie14} for Blocks, Bumps, Heavisine, and Doppler signals~\citep{DonJoh94} with sample size $n = \text{1,500}$, and signal-to-noise ratio $\|f\|_{L^2}/\sigma = 3.5$.}
\label{over_fig}
\end{figure}

Indeed, the derived convergence rates allow us to derive statistical guarantees for such feature detection, see Section~\ref{s:feature}. More precisely, we show for general (incl.~piecewise constant) signals in approximation spaces that the discussed methods recover no less jumps and modes (or troughs) than the truth as the sample sizes tends to infinity; This statement should be interpreted with the built-in parsimony (i.e.,~minimization of number of jumps) of these methods, which suggests that the number of artificial jumps and modes (or troughs) is ``minimal''; {At the same hand}, large increases (or decreases) of the discussed estimators imply increases (or decreases) of the true signal with high confidence; (Theorem~\ref{th:feature}). In Figure~\ref{fig:feature}, based on our theoretical finding, one can claim, for example, that the two \emph{large} jumps (marked by solid vertical lines) are significant with confidence at least $90\%$ (see Remark~\ref{rem:ft}). In the particular case of step signals, we further show the consistency in estimating the number of jumps, and an error bound of the best known order (in terms of sample sizes) on the estimation accuracy of change-point locations {(Proposition~\ref{pp:feature})}.

\begin{figure}
\centering
\includegraphics[width=0.8\textwidth,clip]{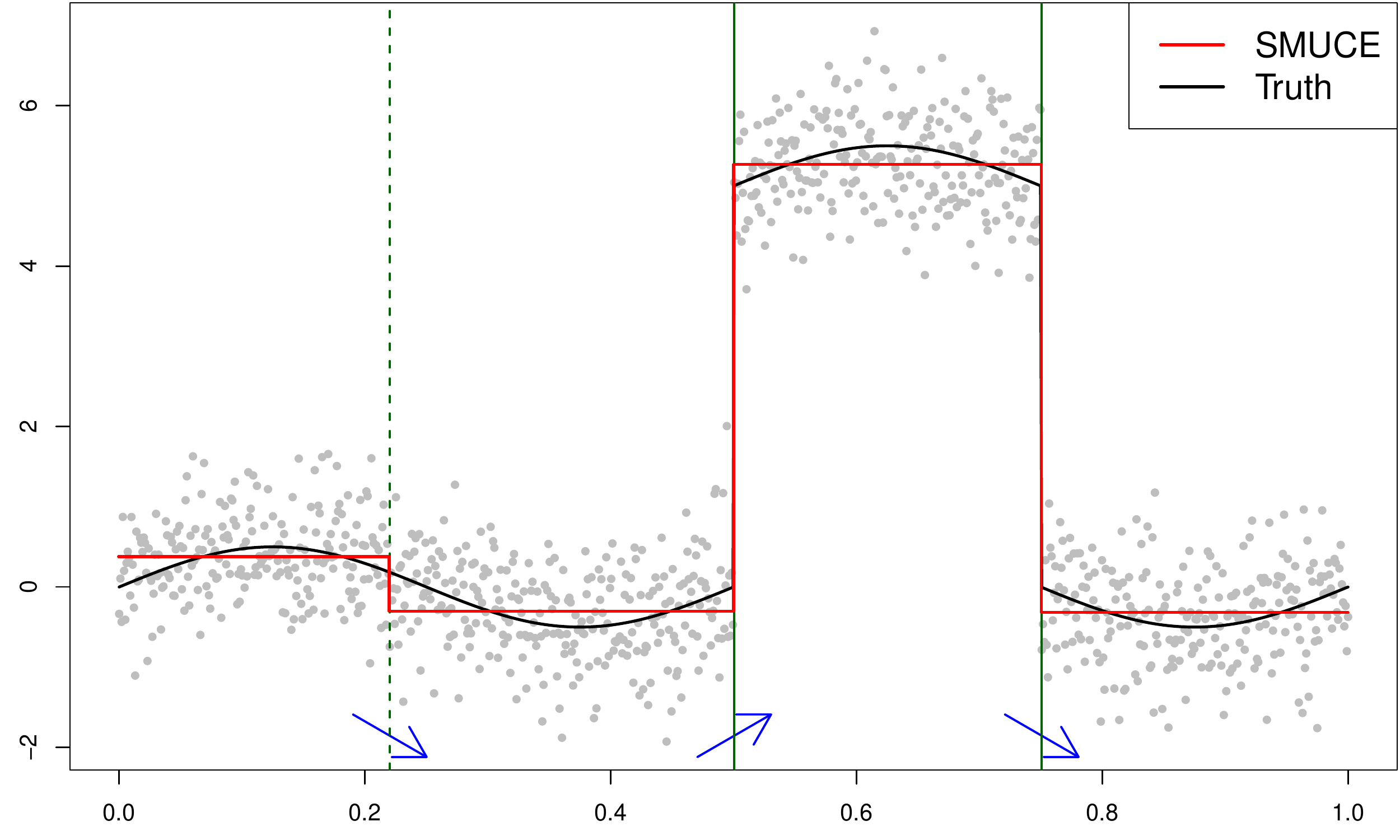}
\caption{Feature detection by SMUCE with universal threshold $\eta(0.1)$ by~\eqref{eq:refineQ} (sample size $n=1,000$, $\text{SNR} = 5$). The solid vertical lines mark significant jumps, while the dashed one marks an insignificant jump; and the arrows at the bottom indicate significant increases and decreases; with simultaneous confidence at least $90\%$. See {Remark~\ref{rem:ft} in} Section~\ref{s:feature} for details.
\label{fig:feature}} 
\end{figure}

Finally, we address the issue how to benchmark properly the investigated methods. To this end, we show that the multiscale change-point segmentation methods perform nearly no worse than piecewise constant segmentation estimators whose change-point locations are provided by an oracle. By considering such oracles, we discover a \emph{saturation} phenomenon {(Theorem~\ref{th:oSeg} and Example~\ref{ex:sat})} for the class of all piecewise constant segmentation estimators: only the suboptimal rate $n^{-2/3}$ is attainable for smoother functions in H\"older classes with $\alpha> 1$. From a slightly different perspective, we show that the multiscale change-point segmentation methods  perform nearly as well as the best {(deterministic)} piecewise constant approximant of the {true} signal with the same number of jumps or less {(Proposition~\ref{pp:bestApp}).} 

Besides such theoretical interest~\citep[cf.~also][]{LinSeo14,Far14}, the study on models beyond piecewise constant functions is also of particular practical importance, since a piecewise constant function is {actually known to be} only an approximation of the underlying signal in many applications. {For example, in} DNA copy number analysis, for which the change-point regression model {with locally constant signal} is commonly assumed~\citep[see e.g.][]{OlsVenLucWig04,Lai05},  a periodic trend distortion {with small amplitude} (known as genomic waves) {is well known to be present}~\citep{DLHYGHBMW08}. Thus {our work} can be {also} regarded as examination of the robustness of {such} segmentation methods against model misspecification. {We} consider a piecewise constant estimator as robust, if it recovers the majority of interesting features of the underlying true regression function with as small number of jumps as possible. For instance, Figure~\ref{robust_fig} shows the performance of SMUCE on a typical signal from DNA copy number analysis, where a locally constant function is slightly perturbed, in cases of different noise levels. Visually, SMUCE {seems to recover the} major features, and the recovered signal provides a simple yet informative summary of the data, meanwhile staying close to the true signal, which confirms our theoretical findings. We note that our viewpoint here {complements} a recent work by~\cite{SBK16} who considered a \emph{reverse} scenario: a sequence of smooth functions approaches a step function in the limit.  

\begin{figure}[!ht]
\centering
\includegraphics[width=0.9\textwidth]{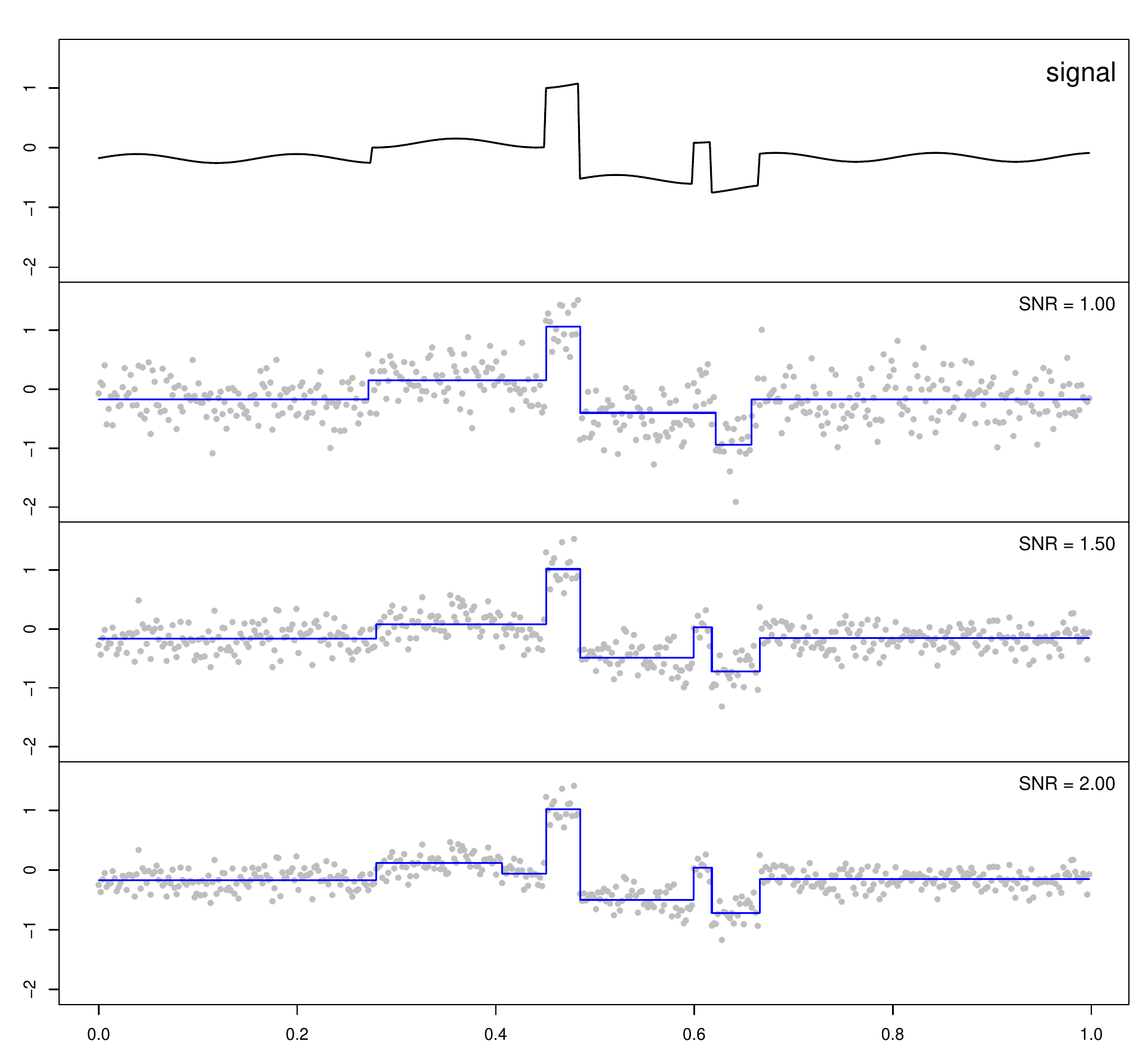}
\caption{{Estimation by {SMUCE} for the signal in~\cite{OlsVenLucWig04} and~\cite{ZhaSie07} with various signal-to-noise ratios $\|f\|_{L^2}/\sigma$, see also Section~\ref{s:numerics}.}
\label{robust_fig}} 
\end{figure}

In summary, we show that a large class of multiscale change-point segmentation methods with a universal parameter choice are \emph{adaptively minimax optimal} (up a log-factor) for step signals (possibly with unbounded number of change-points) and for {(piecewise) smooth}  signals in certain approximation spaces {(Theorems \ref{th:step} and \ref{th:approx})} for general $L^p$-risk. Building on this, we obtain statistical guarantees on feature detection, such as recovery of the number of discontinuities, or modes {(Proposition~\ref{pp:feature} and Theorem~\ref{th:feature})}, which explain well-known empirical findings.  In addition, we show oracle inequalities for such multiscale change-point segmentation methods in terms of both segmentation and approximation of the true signal {(Theorem~\ref{th:oSeg} and Proposition~\ref{pp:bestApp})}.

The paper is organized as follows. In Section~\ref{s:method}, we introduce {a general class of} multiscale change-point segmentation methods, {discuss examples} and {provide technical} assumptions. We derive uniform bounds on the $L^p$-loss over step functions with possibly increasing number of change-points and over certain approximation spaces in Section~\ref{s:theory}, and present their implication on feature detection in Section~\ref{s:feature}. Section~\ref{s:oracle} focuses on the oracle properties of multiscale change-point segmentation methods from a segmentation and an approximation perspective, respectively. Our theoretical findings are {investigated for finite samples by a simulation study} in Section~\ref{s:numerics}. The paper ends with a conclusion in Section~\ref{s:discuss}. Technical proofs are collected in the appendix.

\section{Multiscale change-point segmentation}\label{s:method}
{Recall model~\eqref{eq:model} and let $f$ now in} $\mathcal{S} \equiv \mathcal{S}([0,1))$, the space of right-continuous step functions $f$ on $[0,1)$ {with a finite (but possibly unbounded) number of jumps,} that is, {for some} $k \in \N$
\begin{equation}\label{eq:step}
f = \sum_{i = 0}^k c_i\Ind_{[\tau_i,\tau_{i+1})}\quad \text{with }0 = \tau_0 < \ldots < \tau_{k+1} = 1, \text{ and }c_i \neq c_{i+1} \text{ for each }i. 
\end{equation}
Here $J(f) \coloneqq \{\tau_1,\ldots,\tau_k\}$ denotes the set of change-points of $f$. By \emph{intervals} we always refer to those of the form $[a,b)$, $0 \le a < b \le 1$. {In the following we introduce a general class of multiscale change-point estimators comprising various methods recently developed. To this end, we fix a system $\mathcal{I}$ of subintervals of $[0,1)$ in the first step. Given $\mathcal{I}$, }
we introduce a {general} class of \emph{multiscale change-point segmentation} {estimators} $\hat{f}_n$~\citep[see][]{FriMunSie14,LMS16,PSM16} as a solution to {the (nonconvex) optimization problem}
\begin{equation}\label{eq:mr_segment}
\min_{f \in \mathcal{S}} \#J(f) \qquad \text{ subject to } T_{\mathcal{I}}(y^n; f) \le \thd.
\end{equation}
Here $y^n \coloneqq \{y^n_i\}_{i=0}^{n-1}$ {denotes the observational vector, and} $\thd\in\R$ is a {universal} threshold {to be defined later}. {The side constraint in~\eqref{eq:mr_segment} is defined by} a multiscale test statistic 
\begin{equation*}
T_{\mathcal{I}}(y^n; f) \coloneqq \sup_{\substack{I \in \mathcal{I} \\ f \equiv c_I \text{ on } I}}\left\{ \frac{1}{\sqrt{n\abs{I}}} \abs[B]{\sum_{i/n \in I} (y_i^n - c_I)} - s_I\right\},
\end{equation*}
with $s_I \in \R$ {a} scale penalty, which can be deterministic or random, and might even depend on the candidate $f$ and the data $y^n$. Note, that the solution to the optimization problem~\eqref{eq:mr_segment} {always exists but} might be \emph{non-unique}, in which case one could pick an arbitrary solution.

The side constraint in~\eqref{eq:mr_segment} originates from testing {simultaneously the residuals of a candidate $\hat f$ with values $c_I$ on the multiscale system $\mathcal{I}$. In model~\eqref{eq:model} under a Gaussian error, this} combines all the local {likelihood ratio} tests whether the local mean $\bar{f}_I$ of $f$ on $I$ equals to a given $c_I$ for every $I \in \mathcal{I}$. {Hence,} this provides a criterion for {testing} the constancy of $f$ on each of its segments in $\mathcal{I}$ {\citep[for a detailed account see][]{FriMunSie14}}. The choice of the scale penalties $s_I$ determines the estimator. It balances the detection power over different scales, see~\cite{DueSpok01}, \citet{Wal10} and \citet{FriMunSie14} for several choices, and~\cite{DavHoeKra12} for the unpenalized estimators, $s_I\equiv 0$, {in a slightly different model.} Thus, {any} multiscale change-point segmentation method amounts to search for the most parsimonious candidate over the acceptance region of the multiple tests {on the right hand side in~\eqref{eq:mr_segment}} {performed over the system $\mathcal{I}$}. The threshold $\thd$ in~\eqref{eq:mr_segment} {provides} a trade-off between data-fit and parsimony, {and} can be chosen such that the truth $f$ {satisfies} the side constraint  with a pre-specified probability $1-\beta$. To this end, $\thd\equiv \thd(\beta)$ is {chosen as} the upper ($1-\beta$) quantile of the distribution of $T_{\mathcal{I}}(\xi^n; 0)$, which can be determined by Monte-Carlo simulations or  asymptotic considerations~\citep{FriMunSie14,PSM16}. Then the choice of significance level $\beta$ provides an upper bound on the family-wise error rate of the aforementioned multiple test. It immediately {provides for $\hat f_n$} a control of overestimating the number of jumps $\#J(f)$ of $f$, i.e.
\[
\Prob[a]{\#J(\hat{f}_n) \le \#J(f)} \ge 1- \beta\qquad\text{ uniformly over all } f\in \mathcal{S}. 
\]
Also, {with a different penalty,} it is possible to control instead the false discovery rate by means of \emph{local} quantiles, see~\citet{LMS16} for details. {Comprising the above mentioned choices for $\mathcal{I}$, we will see that}, if the system of intervals $\mathcal{I}$ is rich enough, {for the asymptotic analysis of {all} these estimators} it is sufficient to work with a universal threshold $\thd \asymp \sqrt{\log n}$ in~\eqref{eq:mr_segment} (see Section~\ref{s:theory}). 

The system $\mathcal{I}$ will be required to be truly \emph{multiscale}, i.e.~the multiscale change-point segmentation methods in~\eqref{eq:mr_segment} require the associated interval system  $\mathcal{I}$ to contain different scales, the richness of which can be characterized by the concept of \emph{normality}.

\begin{definition}[\cite{Nem85}]\label{df:cn}
A system $\mathcal{I} \equiv \mathcal{I}_n$ of intervals is called \emph{normal} (or \emph{$c$-normal}) for some constant $c > 1$, provided that it satisfies the following requirements.
\begin{itemize}
\item[(i)]
For every interval $I \subseteq [0,1)$ with length $\abs{I} > c/n$, there is an interval $\tilde{I}$ in $\mathcal{I}$ such that $\tilde{I} \subseteq I$ and $\abs{\tilde{I}} \ge c^{-1}\abs{I}$. 
\item[(ii)]
The end-points of each interval in $\mathcal{I}$ lie on the grid $\set{i/n}{i = 0,\ldots,n-1}$.
\item[(iii)]
The system $\mathcal{I}$ contains {all} intervals $[i/n, (i+1)/n)$, $i = 0, \ldots, n-1$. 
\end{itemize}
\end{definition}

\begin{remark}[Normal systems]
The requirement (i) in the above definition is crucial, while (ii) and (iii) are {of technical nature due to the discrete} sampling locations $\{i/n\}_{i=0}^{n-1}$ {and can be generalized}. Examples of normal systems include the highly redundant system $\mathcal{I}^0$ of all  intervals whose end-points lie on the grid {$\{i/n\}_{i=0}^{n-1}$}~\citep[suggested by e.g.][]{SieYak00,DueSpok01,FriMunSie14} {of order $O(n^2)$}, and less redundant but still asymptotically efficient systems~\citep{DavKov01,Wal10,RivWal12}, {typically of order $O(n\log n)$.} {Remarkably}, there are even normal systems with cardinality of order ${O(n)}$, such as the \emph{dyadic partition system} 
\[
\set[g]{\Bigl[\frac{i}{n}\lceil 2^{-j}n \rceil, \frac{i+1}{n}\lceil 2^{-j}n \rceil\Bigr)}{i = 0, \ldots, 2^j-1,\, j=0,\ldots,\lfloor\log_2n\rfloor},
\]
which {can be shown to be} $2$-normal, see \citet{GrLiMu15}. 
\end{remark}

{
\begin{definition}[Multiscale change-point segmentation estimator]\label{def:mrseg}
Any estimator satisfying~\eqref{eq:mr_segment} is denoted as a \emph{multiscale change-point segmentation estimator}, if 
\begin{itemize}
\item[(i)]
the interval system $\mathcal{I}$ is $c$-normal for some constant $c > 1$; 
\item[(ii)]
the scale penalties $s_I$ satisfy almost surely that
\[
\sup_{I \in \mathcal{I}} \abs{s_I} \le \delta \sqrt{\log n}\qquad \text{ for some constant }\delta > 0.
\]
\end{itemize}
\end{definition}
}

\begin{remark}[Some multiscale segmentation methods]For sub-Gaussian error~$\xi^n$
\[
\sup_{I \in \mathcal{I}} \frac{1}{\sqrt{n\abs{I}}} \abs[B]{\sum_{i/n \in I} \xi_i^n}
\] 
is at most of order $\sqrt{\log n}$~\citep[see e.g.][]{Sha95}, so Definition~\ref{def:mrseg} (ii) is quite natural. In particular, Definition~\ref{def:mrseg} (ii) includes many common scale penalties.  For instance, SMUCE~\citep{FriMunSie14} and FDRSeg~\citep{LMS16} are special cases. More precisely, for SMUCE, it amounts to select $\mathcal{I} = \mathcal{I}^0$, the system of all possible intervals,  and $s_I = \sqrt{2\log (e/\abs{I})}$, and for FDRSeg, the same system $\mathcal{I} = \mathcal{I}^0$ but a different scale penalty $s_I = \sqrt{2\log (e\abs{\tilde{I}}/\abs{I})}$ with $\tilde{I}$ being the constant segment, which contains $I$, of the candidate solution. {The case $s_I \equiv 0$ {is also included and} has been suggested by~\cite{DavHoeKra12}.}
\end{remark}

\section{Asymptotic error analysis}\label{s:theory}

This section mainly provides convergence rates of the multiscale change-point segmentation methods for the model~\eqref{eq:model} with equidistant sampling points. We stress, that the subsequent results can be easily generalized to non-equidistant (and random) sampling points $x_{i,n}$ under appropriate conditions on the design~\citep[see][]{MunDet98}; this is, however, suppressed to ease {presentation}.

\subsection{Convergence rates for step functions}\label{ss:step}
We consider first {locally constant} change-point {regression}, i.e.~the underlying signal $f \in \mathcal{S}$ in model~\eqref{eq:model}. We introduce the class of uniformly bounded piecewise constant functions (recall~\eqref{eq:step}) with up to $k$ jumps
\[
\mathcal{S}_{L}(k) \coloneqq \set[B]{f \in \mathcal{S}}{\#J(f) \le k, \text{ and } \norm{f}_{L^{\infty}} \le L},
\]
for $k \in \N_0$ and $L > 0$. If the number of change-points is bounded, {i.e.~$k$ is known beforehand}, the estimation problem is, roughly speaking, parametric, by interpreting change-point locations and function values as parameters. A rather complete analysis of this situation is provided either from a Bayesian viewpoint~\citep[see e.g.][]{IbrHas81,HuAn03} or from a likelihood viewpoint~\cite[see e.g.][]{YaoAu89,BraMueMue00,SieYak00,BoyKemLieMunWit09,KorKor11}. However, in order to understand {the increasing difficulty {of change-point estimation} as the number of change-points gets {larger},} i.e.~the nonparametric nature of change-point regression, we allow {now} the number of change-points to increase as the number of observations {tends} to infinity.  

\begin{theorem}[{Adaptation I}]\label{th:step}
Assume model \eqref{eq:model}. Let  $0 <  p,\, r < \infty$, and $k_n\in\N_0$ be such that $k_n = o(n)$ as $n \to \infty$. Then:
\begin{itemize}
\item[\emph{(i)}]
If $\hat f_n$ is a multiscale change-point segmentation estimator in Definition~\ref{def:mrseg} with constants $c$ and $\delta$, and threshold 
\begin{equation}\label{eq:defQ}
\thd \coloneqq a\sqrt{\log n}\qquad \text{ for some } a > \delta + \sigma\sqrt{2r+4}, 
\end{equation}
then the following upper bound holds
\begin{equation*}
\limsup_{n \to \infty} \frac{1}{\sqrt{\log n}} \Bigl(\frac{n}{2k_n+1}\Bigr)^{\min\{1/2, 1/p\}} \sup_{f \in \mathcal{S}_L(k_n)}\E[a]{\norm{\hat{f}_n - f}_{L^{p}}^r}^{1/r} < \infty.
\end{equation*}
The same result also holds almost surely if we drop the expectation $\E{\cdot}$. 
\item[\emph{(ii)}]
If noise $\xi_i^n$ in model \eqref{eq:model} has a density $\varphi_{i,n}$ such that for some constants $\sigma_0$ and
$z_0$
\begin{equation}\label{eq:nlb}
\max_{i,n}\int \varphi_{i,n}(x)\log\frac{\varphi_{i,n}(x)}{\varphi_{i,n}(x+z)} dx \le \frac{z^2}{\sigma_0^2} \qquad \text{ for } \abs{z} \le z_0
\end{equation}
then the following lower bound holds
\[
\liminf_{n \to \infty} \Bigl(\frac{n}{2k_n+1}\Bigr)^{\min\{1/2, 1/p\}}\inf_{\hat g_n } \sup_{f \in \mathcal{S}_L(k_n)} \E[a]{\norm{\hat{g}_n - f}_{L^{p}}^r}^{1/r} > 0,
\]
where the infimum is taken over all estimators $g_n$. 
\end{itemize} 
\end{theorem}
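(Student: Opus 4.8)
I would put $a':=a-\delta>\sigma\sqrt{2r+4}$ and work on the good event
\[
A_n\coloneqq\Bigl\{\sup_{I\in\I}\tfrac1{\sqrt{n\abs I}}\bigl\lvert\textstyle\sum_{i/n\in I}\xi_i^n\bigr\rvert\le a'\sqrt{\log n}\Bigr\}.
\]
Since $\I$ contains at most $n^2$ grid intervals and $\sum_{i/n\in I}\xi_i^n$ is centred sub-Gaussian with scale $\sigma\sqrt{n\abs I}$, a union bound gives $\Prob[a]{A_n^c}\le 2n^{2-a'^2/(2\sigma^2)}\eqqcolon 2n^{-r-\varepsilon_0}$ with $\varepsilon_0>0$ forced by \eqref{eq:defQ}. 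On $A_n$, because $\bar f_i^n=c_I$ whenever $f\equiv c_I$ on a grid interval $I$ and $\sup_I\abs{s_I}\le\delta\sqrt{\log n}$, the truth $f$ satisfies the side constraint of \eqref{eq:mr_segment}, so by parsimony $\#J(\hat f_n)\le\#J(f)\le k_n$. I would also use the routine reduction that $\hat f_n$ may be taken bounded: on any maximal constant segment of $\hat f_n$ that contains some $I\in\I$ the side constraint for that $I$ (together with $\abs{s_I}\le\delta\sqrt{\log n}$ and $\sup_i\abs{\xi_i^n}\le a'\sqrt{\log n}$ on $A_n$, the unit intervals lying in $\I$) forces the value to lie within $\norm f_{L^\infty}+2a\sqrt{\log n}$ of $0$, while a segment containing no interval of $\I$ is unconstrained by \eqref{eq:mr_segment} and its value may be reassigned to a neighbour without changing $\#J(\hat f_n)$ or feasibility; hence $\norm{\hat f_n-f}_{L^\infty}\le 2L+2a\sqrt{\log n}$ on $A_n$ and $\norm{\hat f_n-f}_{L^\infty}\le 2L+\sup_i\abs{\xi_i^n}+(a+\delta)\sqrt{\log n}$ always.

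\textbf{Core estimate on $A_n$, and the complement.} Let $I_1,\dots,I_m$ be the pieces of the common refinement of $J(f)$ and $J(\hat f_n)$, so $m\le 2k_n+1$ and on each $I_j$ both $f\equiv c_j$ and $\hat f_n\equiv\hat c_j$. For $\abs{I_j}>c/n$, $c$-normality supplies $\tilde I_j\in\I$ with $\tilde I_j\subseteq I_j$, $\abs{\tilde I_j}\ge c^{-1}\abs{I_j}$; inserting $I=\tilde I_j$ into $T_\I(y^n;\hat f_n)\le\thd$, using $\bar f_i^n=c_j$ on $\tilde I_j$ and the bound defining $A_n$, a short rearrangement yields $\abs{c_j-\hat c_j}\le 2a\sqrt c\,\sqrt{\log n}\,(n\abs{I_j})^{-1/2}$. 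For $\abs{I_j}\le c/n$ I would only use the crude bound $\abs{c_j-\hat c_j}\le 2L+2a\sqrt{\log n}$, whose total contribution is at most $(2L+2a\sqrt{\log n})^p(2k_n+1)c/n$. Thus on $A_n$,
\[
\norm{\hat f_n-f}_{L^p}^p\le (2a\sqrt c)^p(\log n)^{p/2}n^{-p/2}\!\!\sum_{j:\,\abs{I_j}>c/n}\!\!\abs{I_j}^{1-p/2}+(2L+2a\sqrt{\log n})^p\,\frac{(2k_n+1)c}{n},
\]
and for $p\ge2$ one has $\sum_{j:\abs{I_j}>c/n}\abs{I_j}^{1-p/2}\le(2k_n+1)(c/n)^{1-p/2}$ (the exponent is $\le0$), while for $p<2$ concavity of $x\mapsto x^{1-p/2}$ and $\sum_j\abs{I_j}=1$ give $\sum_j\abs{I_j}^{1-p/2}\le m^{p/2}\le(2k_n+1)^{p/2}$; since $(2k_n+1)/n\le\bigl((2k_n+1)/n\bigr)^{\min\{1,p/2\}}$, in both cases $\norm{\hat f_n-f}_{L^p}\le C\sqrt{\log n}\,\bigl((2k_n+1)/n\bigr)^{\min\{1/2,1/p\}}$ on $A_n$. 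On $A_n^c$ I would use the crude global bound, giving $\E[a]{\norm{\hat f_n-f}_{L^p}^r\Ind_{A_n^c}}\lesssim\bigl(L^r+(\log n)^{r/2}\bigr)\Prob[a]{A_n^c}+\E[a]{\max_i\abs{\xi_i^n}^{2r}}^{1/2}\Prob[a]{A_n^c}^{1/2}$, which, from $\E[a]{\max_i\abs{\xi_i^n}^{2r}}\lesssim(\sigma^2\log n)^r$ and $\Prob[a]{A_n^c}\le2n^{-r-\varepsilon_0}$, is $o\bigl((\log n)^{r/2}n^{-r/2}\bigr)$ — this is exactly where $a'>\sigma\sqrt{2r+4}$ is needed, and it is negligible because $\bigl((2k_n+1)/n\bigr)^{r\min\{1/2,1/p\}}\ge n^{-r/2}$. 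Taking $r$-th roots proves the $\limsup$ bound; the a.s. statement follows from the $A_n$-estimate and Borel–Cantelli (with, if $r<1$, a dyadic thinning of the union bound to ensure summability).

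\textbf{Part (ii): the lower bound.} I would use Fano's method. Take $m\asymp k_n$ disjoint blocks of length $\asymp1/m$ in $[0,1)$ with grid-aligned boundaries; inside block $j$ place a bump $\epsilon\Ind_{B_j}$ with $\abs{B_j}=1/n,\ \epsilon=\epsilon_0$ a small constant if $p\ge2$, and $B_j$ the whole block, $\epsilon=\epsilon_0\sqrt{k_n/n}$ if $p<2$, in either case with $\epsilon\le\min\{L,z_0\}$; for $\theta\in\{0,1\}^m$ set $f_\theta=\sum_j\theta_j\epsilon\Ind_{B_j}\in\mathcal S_L(k_n)$ (rescaling $m$ by a constant to account for the two jumps per thin bump). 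By Varshamov–Gilbert there is $\Theta\subseteq\{0,1\}^m$ with $\log\abs\Theta\gtrsim m$ and pairwise Hamming distance $\gtrsim m$, whence $\norm{f_\theta-f_{\theta'}}_{L^p}^p=\epsilon^p\abs{B_1}d_H(\theta,\theta')\gtrsim\bigl((2k_n+1)/n\bigr)^{p\min\{1/2,1/p\}}$ in both regimes, while \eqref{eq:nlb} (the local means differing by at most $\epsilon\le z_0$) gives $\mathrm{KL}(P_\theta\Vert P_{\theta'})\le\sigma_0^{-2}\sum_i(\bar f_{\theta,i}^n-\bar f_{\theta',i}^n)^2=\sigma_0^{-2}\epsilon^2 n\abs{B_1}d_H(\theta,\theta')\lesssim\epsilon_0^2 m$, which for $\epsilon_0$ small is $\le\tfrac12\log\abs\Theta$. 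Fano's inequality then yields $\inf_{\hat g_n}\max_{\theta\in\Theta}\Prob[a]{\norm{\hat g_n-f_\theta}_{L^p}\ge\tfrac12\delta}\ge\tfrac14$ for large $n$ with $\delta\asymp\bigl((2k_n+1)/n\bigr)^{\min\{1/2,1/p\}}$, and Markov's inequality converts this to the claimed lower bound on $\inf_{\hat g_n}\sup_{f\in\mathcal S_L(k_n)}\E[a]{\norm{\hat g_n-f}_{L^p}^r}^{1/r}$; when $k_n$ stays bounded the packing is replaced by a fixed two-point (resp.\ few-point) family obtained by shifting change-points by $1/n$ on the grid, which gives the same separation with bounded Kullback–Leibler divergence.

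\textbf{Main obstacle.} The delicate step is the core estimate on $A_n$: one must exploit $c$-normality so that every ``long'' piece of the common refinement contains an interval of $\I$ of comparable length, while the ``short'' pieces — of total length only $O(k_n/n)$ — are absorbed through the crude sup-norm bound, which itself rests on the boundedness reduction of $\hat f_n$; the split of $\sum_j\abs{I_j}^{1-p/2}$ into the regimes $p\ge2$ and $p<2$ is precisely what produces the exponent $\min\{1/2,1/p\}$. The rest is bookkeeping, except that the constant $\sigma\sqrt{2r+4}$ in \eqref{eq:defQ} is dictated exactly by making $\E[a]{\norm{\hat f_n-f}_{L^p}^r\Ind_{A_n^c}}$ of smaller order than $n^{-r/2}$.
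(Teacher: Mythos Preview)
Your proof is correct and follows essentially the same route as the paper's: the good-event estimate via $c$-normality on the common refinement $J(f)\cup J(\hat f_n)$ with the $p<2$ versus $p\ge2$ split, and Fano's method with two hypothesis families for the lower bound (your thin bumps with constant height and fat bumps with height $\asymp\sqrt{k_n/n}$ are exactly the paper's shifted-change-point and perturbed-level collections, respectively). The only cosmetic difference is your handling of $A_n^c$ by Cauchy--Schwarz against the crude $L^\infty$ bound $\norm{\hat f_n-f}_{L^\infty}\le 2L+\sup_i\abs{\xi_i^n}+(a+\delta)\sqrt{\log n}$, whereas the paper splits the tail integral of $\norm{\hat f_n-f}_{L^p}^p$ at level $2n^{p/2}$ and controls it via $\norm{\hat f_n-g}_{L^p}\le(a+\delta)\sqrt{\log n}$ with $g=\sum_i y_i^n\Ind_{[i/n,(i+1)/n)}$ together with a higher-moment bound on $\norm{\xi^n}_{\ell^{p+s}}$; both routes give the required $O(n^{-r/2})$.
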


\begin{proof}
See Appendix~\ref{app:step}. 
\end{proof}

\begin{remark}\label{rem:adapt}
{
Note that condition~\eqref{eq:nlb} is a typical assumption for establishing lower bounds \citep[see e.g.][]{Tsy09}. In particular, if $\exp(-c_1 x^2) \lesssim \varphi_{i,n}(x) \lesssim\exp(-c_2 x^2)$ with constants $c_1,c_2$, then condition~\eqref{eq:nlb} holds for any $z_0 > 0$, e.g.~a Gaussian density.}  Theorem~\ref{th:step} states that multiscale change-point segmentation estimators are up to a log-factor adaptively minimax optimal over sequences of classes $\mathcal{S}_{L}(k_n)$ for all possible $k_n$ and $L$. A common choice of $k_n$ is  $k_n \asymp n^{\theta}$, $0 \le \theta < 1$, which in particular reproduces the convergence results in~\cite{LMS16}.  It also includes the case $\theta = 0$, where, by convention, $k_n \equiv k$ is bounded. Note that the universal threshold $\thd$ in~\eqref{eq:defQ} is independent of the specific loss function, provided that $a$ is large enough. Furthermore, one can relax such choice of $\thd$ by allowing $a = \delta + \sigma\sqrt{2r+4}$ in~\eqref{eq:defQ}, and even select 
\begin{equation}\label{eq:refineQ}
\thd= \thd(\beta)  \qquad \text{ with } \beta =  \mathcal{O}({n}^{-r}), 
 \end{equation}
see~Appendix~\ref{app:step}. By Shao's theorem~\citep{Sha95}, it is clear that $\thd(\beta) \le (\delta + \sigma\sqrt{2})\sqrt{\log n}$. {A more refined analysis is even possible, although not necessary for our purposes.}  For instance, in case of no scale penalization and $\mathcal{I}$ consisting of all intervals, it follows from~\cite{SieVen95} and \citet{Ka07} that 
\[
\thd(\beta)\sim \sqrt{2\log n}+ \frac{\log\log n + \log\frac{\lambda}{4\pi} -2\log\log(1/\beta)}{2\sqrt{2\log n}}\qquad\text{ as }n \to \infty,
\]  
with constant $\lambda \in (0,\infty)$. Note, finally, that the restriction $p< \infty$ {in Theorem~\ref{th:step}} is necessary and natural, because $L^{\infty}$-loss is not {reasonable in} change-point estimation problems \citep[as no estimator can detect change-point locations at a rate faster than $\mathcal{O}(1/n)$, see][which leads to inconsistency {of any estimator} with respect to $L^\infty$-loss]{ChaWal13}.
\end{remark}

\subsection{Robustness to model misspecification}\label{ss:approx}
{As discussed in the Section~\ref{s:intro}, {in} practical applications, it often occurs that the underlying signal $f$ in model~\eqref{eq:model} is only approximately piecewise constant. To address this issue, we next consider the $L^p$-loss of the multiscale change-point segmentation methods for more general functions. In order to characterize the degree of model misspecification, we {adopt} from {nonlinear} approximation theory~\citep[cf.][]{DeLo93,DeV98} the \emph{approximation spaces}~as
\[
\A^{\gamma}_q \coloneqq \set[B]{f \in  \D}{\sup_{k \ge 1}k^{\gamma}\Delta_{q,k}(f) < \infty},\quad \text{ for } 0 < q \le \infty, \,  \gamma > 0,
\]
where the approximation error $\Delta_{q,k}$ is defined as 
\begin{equation}\label{eq:approxErr}
\Delta_{q,k}(f) \coloneqq \inf \set[g]{\norm{f - g}_{L^q}}{g \in \mathcal{S}, \, \#J(g) \le k}.
\end{equation}
{Introduce the subclasses} 
\[
\A_{q,L}^{\gamma} \coloneqq \set[B]{f \in  \D}{\sup_{k \ge 1}k^{\gamma}\Delta_{q,k}(f) \le L,\text{ and } \norm{f}_{L^{\infty}}\le L}, 
\]
for $ 0 < q \le \infty, \text{ and } \gamma, L > 0$.
The best approximant in \eqref{eq:approxErr} exists, but is in general non-unique, see e.g., \citet[Chapter 12]{DeLo93}. It follows readily from definition that $\A_q^{\gamma} = \bigcup_{L > 0} \A_{q,L}^{\gamma}$ and that $\A^\gamma_{q_1,L} \subseteq \A^\gamma_{q_2,L}$ for all $q_1 \ge q_2$. Note that $\A_q^{\gamma}$ is actually an interpolation space between $L^q$ and some Besov space \citep[see][]{Pet88}. The order $\gamma$ of these spaces (or classes) reflects the speed of approximation of $f$ by step functions as the number of change-points increases. It is further known that if $f$ lies in $\A^\gamma_q$ for some $\gamma > 1$ and if $f$ is piecewise continuous, then $f$ is piecewise constant, see \cite{BurHal75} (which is often referred to as a \emph{saturation} result in the approximation theory community). Thus, it {is custom} to consider $\A^{\gamma}_q$ with $0<\gamma\le 1$.

The rates of convergence for approximation classes are provided below.
\begin{theorem}[{Adaptation II}]\label{th:approx}
Let $0 <  p,\, r < \infty$, $\max\{p,2\} \le q \le \infty$, and {assume that $\hat{f}_n$ is a multiscale change-point segmentation estimator in Definition~\ref{def:mrseg}} with constants $c$ and $\delta$, and {universal} threshold {as in~\eqref{eq:defQ}.} Then 
\begin{equation*}
\limsup_{n \to \infty}\,(\log n)^{-\frac{\gamma+(1/2-1/p)_+}{2\gamma+1}}n^{\frac{2\gamma}{2\gamma+1}\min\{1/2,1/p\}}\sup_{f \in \A^{\gamma}_{q,L}}\E[a]{\norm{\hat{f}_n - f}_{L^{p}}^r}^{1/r} < \infty.
\end{equation*}
The same result also holds almost surely if we drop the expectation $\E{\cdot}$.
\end{theorem}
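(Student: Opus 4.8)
The plan is to reduce the assertion to the step-function bound of Theorem~\ref{th:step}. First I would confront $\hat f_n$ with a well-chosen piecewise constant \emph{surrogate} $h=h_n$ for $f$, exploit the parsimony built into~\eqref{eq:mr_segment}, and then balance the approximation error of $h$ against the estimation error of $\hat f_n$. Fix a small $\rho>0$, set $\varepsilon_n:=\rho\sqrt{\log n/n}$, and write $E_q(S):=\inf_{c\in\R}\norm{f-c}_{L^q(S)}$ for an interval $S$. With the calibration~\eqref{eq:defQ} of the threshold, Shao's deviation and a.s.\ bounds (\citet{Sha95}; cf.~Remark~\ref{rem:adapt}) give, for $\rho$ small enough, that the event $\mathcal B_n:=\{\sup_{J\in\mathcal I}(n\abs{J})^{-1/2}\abs{\sum_{j/n\in J}\xi^n_j}\le(a-\delta-\rho)\sqrt{\log n}\}$ has $\mathbb P(\mathcal B_n^c)=\mathcal O(n^{-r})$ and holds for all large $n$ almost surely. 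On $\mathcal B_n^c$ the singleton intervals in the side constraint (Definition~\ref{df:cn}(iii)) still give the crude deterministic bound $\norm{\hat f_n}_{L^\infty}\le\max_i\abs{y^n_i}+C\sqrt{\log n}$, whose $r$-th moment is of smaller order than the target rate, so $\mathcal B_n^c$ contributes negligibly to the risk, and by Borel--Cantelli the almost sure claim reduces to the deterministic estimate on $\mathcal B_n$.

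\emph{Surrogate and parsimony.} Let $h$ be piecewise constant with breakpoints a grid partition of $[0,1)$ into the fewest pieces $S$ with $\abs{S}^{1/2-1/q}E_q(S)\le\varepsilon_n$, and set $h\equiv c^\ast_q(S)$ (a best $L^q$ constant) on each $S$; for $q=\infty$ this is the mid-range and the stopping rule reads $\abs{S}\,(\sup_Sf-\inf_Sf)^2\lesssim\varepsilon_n^2$. For any $I\in\mathcal I$ contained in a piece $S$ of $h$ one has, on $\mathcal B_n$,
\[
\frac{1}{\sqrt{n\abs{I}}}\,\Bigl|\sum_{i/n\in I}\bigl(y^n_i-c^\ast_q(S)\bigr)\Bigr|-s_I\ \le\ \sup_{J\in\mathcal I}\frac{1}{\sqrt{n\abs{J}}}\Bigl|\sum_{j/n\in J}\xi^n_j\Bigr|\ +\ \abs{s_I}\ +\ \sqrt{\tfrac{n}{\abs{I}}}\,\Bigl|\int_I\bigl(f-c^\ast_q(S)\bigr)\Bigr|,
\]
where $\abs{s_I}\le\delta\sqrt{\log n}$ and, by H\"older's inequality, the last term is at most $n^{1/2}\abs{I}^{1/2-1/q}E_q(S)\le n^{1/2}\abs{S}^{1/2-1/q}E_q(S)\le\rho\sqrt{\log n}$ (recall $q\ge\max\{p,2\}\ge2$, so $1/2-1/q\ge0$ and $\abs{I}\le\abs{S}$), except for the shortest intervals, where one uses instead the trivial bound $\abs{f-c^\ast_q(S)}\le 2L$ to the same effect. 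Hence $T_{\mathcal I}(y^n;h)\le\eta$, and minimality of $\hat f_n$ in~\eqref{eq:mr_segment} forces $\#J(\hat f_n)\le\#J(h)=:k_n-1$ on $\mathcal B_n$.

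\emph{Error decomposition and balancing.} On $\mathcal B_n$ both $\hat f_n$ and $h$ are step functions with at most $k_n-1$ jumps satisfying the side constraint. Splitting $[0,1)$ into the at most $2k_n-1$ maximal intervals $[\alpha,\beta)$ on which $\hat f_n$ and $h$ are simultaneously constant, and probing each by an interval of $\mathcal I$ of comparable length (which exists by $c$-normality, Definition~\ref{df:cn}(i), once the $\mathcal O(k_n)$ pieces of length $\le c/n$ are lumped together into a lower-order remainder), the two side constraints force the values of $\hat f_n$ and $h$ on $[\alpha,\beta)$ to differ by at most a constant times $\sqrt{\log n/(n(\beta-\alpha))}$; summing $(\beta-\alpha)^{1-p/2}$ over these pieces for $p\le2$ (and using $\beta-\alpha\ge1/n$ for $p>2$) --- the computation underlying Theorem~\ref{th:step}, with the true signal replaced by $h$ --- yields $\norm{\hat f_n-h}_{L^p}\lesssim\sqrt{\log n}\,(k_n/n)^{\min\{1/2,1/p\}}$. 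For the bias, H\"older's inequality ($p\le q$) and the stopping rule give $\norm{f-h}^p_{L^p}=\sum_S\norm{f-c^\ast_q(S)}^p_{L^p(S)}\le\sum_S\abs{S}^{\,1-p/q}E_q(S)^p\le\varepsilon_n^p\sum_S\abs{S}^{\,1-p/2}$, with $\sum_S\abs{S}^{1-p/2}\le k_n^{p/2}$ for $p\le2$ and $\le k_n n^{p/2-1}$ for $p>2$, so $\norm{f-h}_{L^p}\lesssim\sqrt{\log n}\,(k_n/n)^{\min\{1/2,1/p\}}$ as well. Thus on $\mathcal B_n$
\[
\norm{\hat f_n-f}_{L^p}\ \lesssim\ \sqrt{\log n}\,(k_n/n)^{\min\{1/2,1/p\}},
\]
and substituting $k_n\lesssim(n/\log n)^{1/(2\gamma+1)}$ reproduces the asserted rate, the summand $(1/2-1/p)_+$ in the $(\log n)$-power being exactly the extra factor produced in the $p>2$ branch.

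\emph{The hard part} is the count $k_n\lesssim(n/\log n)^{1/(2\gamma+1)}$, i.e.\ a bound on the least number of pieces that makes every piece ``pass''. For $q=\infty$ this is clean: the greedy partition for the rule $\abs{S}\,(\sup_Sf-\inf_Sf)^2\le\varepsilon_n^2$ has maximal pieces, so pairing consecutive ones and comparing with a near-best $L^\infty$ step approximant of $f$ with $M$ jumps --- whose pieces have oscillation $\lesssim LM^{-\gamma}$ because $f\in\A^\gamma_{\infty,L}$ --- shows the number of (paired) pieces is $\lesssim M+L^2\varepsilon_n^{-2}M^{-2\gamma}$; optimizing over $M$ gives $M\asymp(L/\varepsilon_n)^{2/(2\gamma+1)}\asymp(n/\log n)^{1/(2\gamma+1)}$. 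Since $q=\infty$ already captures bounded variation and (piecewise) H\"older signals, this settles the cases highlighted in the introduction. For general $q\in[\max\{p,2\},\infty)$ one instead refines a near-best $L^q$ step approximant only on its ``bad'' pieces, budgeting the refinement through the superadditivity of $S\mapsto E_q(S)^q$ (which caps how the local $L^q$ errors of any sub-partition can aggregate) together with the automatic exemption of pieces of length $\lesssim\varepsilon_n^2$; a careful combinatorial bookkeeping of this refinement, optimized over $M$ as above, again yields the count $\lesssim(n/\log n)^{1/(2\gamma+1)}$. Disentangling this $\ell^q$-aggregation of local approximation errors from the scale-dependent passing threshold is the genuine obstacle; the rest is the step-function machinery of Theorem~\ref{th:step} plus routine estimates.
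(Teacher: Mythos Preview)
Your approach is correct but takes a genuinely different route from the paper. You build the surrogate $h$ \emph{implicitly} via the greedy stopping rule $|S|^{1/2-1/q}E_q(S)\le\varepsilon_n$ (which simultaneously forces test-passing and gives a piece-wise bias control), and then must \emph{prove} the count $k_n\lesssim(n/\log n)^{1/(2\gamma+1)}$ by a combinatorial comparison with a near-best $L^q$ approximant. The paper instead fixes $k_n\asymp(n/\log n)^{1/(2\gamma+1)}$ at the outset, takes a near-best $L^q$ approximant $\tilde f_{k_n}$ with $k_n$ jumps, and inserts $k_n$ additional breakpoints chosen (via the intermediate value theorem for $x\mapsto\int_0^x|f-\tilde f_{k_n}|^2$) to \emph{equidistribute} the global $L^2$ error; this makes each segment satisfy $\norm{(f-f_{k_n})\Ind_I}_{L^2}\lesssim k_n^{-\gamma-1/2}$, hence pass the multiscale test, with the count $2k_n$ given by construction, and the bias controlled globally by $\norm{f-f_{k_n}}_{L^p}\le\norm{f-f_{k_n}}_{L^q}\lesssim k_n^{-\gamma}$ rather than piece by piece. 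The paper's equidistribution trick thus sidesteps entirely the ``genuine obstacle'' you flag. That said, your general-$q$ sketch does go through: for adjacent greedy pairs $T_i=S_i\cup S_{i+1}$ inside a piece $P$ of a near-best $M$-jump approximant $g$, the failure $|T_i|^{1/2-1/q}E_q(T_i)>\varepsilon_n$ and $E_q(T_i)\le\norm{(f-g)\Ind_{T_i}}_{L^q}$ give $N_P<\varepsilon_n^{-2}\sum_i|T_i|^{1-2/q}\norm{(f-g)\Ind_{T_i}}_{L^q}^2$, and two applications of H\"older (exponents $q/(q-2)$ and $q/2$, first over $i$ then over $P$) yield $\sum_PN_P\lesssim L^2M^{-2\gamma}/\varepsilon_n^2$; optimizing over $M$ recovers your claimed count. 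One minor caveat: your crude bound $\norm{\hat f_n}_{L^\infty}\le\max_i|y_i^n|+C\sqrt{\log n}$ on $\mathcal B_n^c$ tacitly assumes every constant segment of $\hat f_n$ contains some singleton $[i/n,(i+1)/n)\in\mathcal I$, which is not guaranteed by Definition~\ref{def:mrseg}; the paper instead handles the bad-noise contribution by a direct tail computation on $\norm{\xi^n}_{\ell^{p+s}}$.
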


\begin{proof}
See Appendix~\ref{app:approx}.
\end{proof}

\begin{remark}\label{rem:adapt2}
Similar to Theorem~\ref{th:step}, the above theorem shows that {any} multiscale change-point segmentation method with a universal threshold  automatically adapts to the smoothness of the approximation spaces, in the sense that it has a faster rate for larger order $\gamma$.  Note that such convergence rates are minimax optimal (up to a log-factor) over $\A^{\gamma}_{q,L}$ for every $0 < \gamma \le 1$, $2 \le q \le \infty$ and $L > 0$, see Example~\ref{ex:smooth} (i) below. {Also, we point out that Theorem~\ref{th:approx} still holds if one uses the refined rule in~\eqref{eq:refineQ} for the choice of threshold $\thd$, see  Appendix~\ref{app:approx} and also Remark~\ref{rem:adapt}}. 

Moreover, note that the convergence rates of the multiscale change-point segmentation methods above {generalize} the rates reported in~\cite{BoyKemLieMunWit09} for jump-penalized least square estimators, and are faster than the rates  reported in~\cite{Fry07} for the unbalanced Haar wavelets based estimator, with the difference being in log-factors.   
\end{remark}

\begin{example}\label{ex:smooth}
(i) \emph{(Piecewise) H\"{o}lder functions.}
For $0 < \alpha \le 1$ and $L > 0$, we consider the H\"{o}lder function classes
\begin{multline*}
H^{\alpha}_L \equiv H^{\alpha}([0,1))\coloneqq \set[b]{f \in \D}{\norm{f}_{L^{\infty}} \le L,  \text{ and } \\
 \abs{f(x_1) - f(x_2)} \le L \abs{x_1 - x_2}^{\alpha} \text{ for all } x_1,x_2 \in [0,1)}, 
\end{multline*}
and the piecewise H\"{o}lder function classes with at most $\kappa$ jumps
\begin{multline*}
H^{\alpha}_{\kappa,L} \equiv H^{\alpha}_{\kappa,L}([0,1)) \coloneqq \set[B]{f \in \D}{\text{there is a partition }\{I_i\}_{i = 0}^l, \text{ with } l\le \kappa,\text{ of }[0,1) \\ \text{ such that } f\big\vert_{I_i} \in H^{\alpha}_L(I_i) \text{ for all possible } i}. 
\end{multline*}
Obviously, the latter one contains the former as a special case when $\kappa = 0$, that is,  $H^{\alpha}_{0,L} \equiv H^{\alpha}_{L}$. 
It {is easy to see} that $H^{\alpha}_L \subseteq \A^{\alpha}_{q,L'}$ with $L' \ge L$, $0< q \le \infty$, and $H^{\alpha}_{\kappa, L} \subseteq \A^{\alpha}_{q,L'}$ with $L' \ge L(\kappa+1)^{\alpha+1/2}$, $0< q \le \infty$. 

It is known that the fastest possible rate over $H^{\alpha}_L$, $0 < \alpha \le 1$, is at most of order $n^{-2\alpha/(2\alpha+1)\min\{1/2,1/p\}}$ with respect to the $L^p$-loss, $0 <  p < \infty$, see e.g.~\citep{IbrHas81}. Thus, as a consequence of Theorem~\ref{th:approx}, the multiscale change-point segmentation method with a universal threshold is simultaneously minimax optimal (up to a log-factor) over $\A_{q,L}^\alpha$, $H^{\alpha}_L$ and $H^{\alpha}_{\kappa, L}$ for every $\kappa \in \N_0$, $\max\{p,2\}\le q\le \infty$, $0 < \alpha \le 1$ and $L > 0$, that is, adaptive to the smoothness order $\alpha$ of the underlying function. 

\medskip

(ii) \emph{Bounded variation functions.}
Recall that the (total) variation $\norm{\cdot}_{\TV}$ of a function $f$ is defined as
\[
\norm{f}_{\TV} \coloneqq \sup\set[B]{\sum_{i = 0}^m \abs{f(x_{i+1}) - f(x_i)}}{0 = x_0 < \cdots < x_{m+1} =1, \, m\in \N}.
\]
We introduce the {c\`{a}dl\`{a}g} bounded variation classes 
\[
\BV_L \equiv \BV_L([0,1)) \coloneqq \set[b]{f \in \D}{\norm{f}_{L^\infty} \le L, \text{ and } \norm{f}_{\TV} \le L} \quad \text{ for } L > 0. 
\]
Elementary calculation, together with Jordan decomposition, implies that
$$
\BV_L \subseteq \A^1_{q, L'}\qquad\text{ for } L' \ge L \text{ and } 0< q\le \infty.
$$ 
Since the H\"{o}lder class $H^1_{L} \subseteq \BV_{L}$, the best possible rate for $ \BV_{L}$ {cannot be} faster than that for  $H^1_{L}$, which is of order $n^{-2/3\min\{1/2,1/p\}}$. Then, Theorem~\ref{th:approx} implies that the multiscale change-point segmentation method attains the minimax optimal rate (up to a log-factor) over the bounded variation classes $\BV_{L}$ for $L > 0$. 
\end{example}

All the examples above concern functions of smoothness order $\le 1$. For smoother functions, say $H^\alpha_L$ with $\alpha > 1$~\citep[see e.g.][for definition]{Tsy09}, it holds that  $H^\alpha_L \subseteq \A_q^1$ but $H^\alpha_L \not\subseteq \A^\gamma_q$ for any $\gamma>1$. Thus, by Theorem~\ref{th:approx}, we obtain that multiscale change-point segmentation estimators attain (up to a log-factor)  the rates of order $n^{-2/3\min\{1/2,1/p\}}$ for $H_L^\alpha$ with $\alpha > 1$ in terms of $L^p$-loss. Note that such rates are suboptimal, but turn out to be the saturation barrier for every piecewise constant segmentation estimator; As we will see in Example~\ref{ex:sat} in Section~\ref{ss:oSeg}, piecewise constant segmentation estimators even with the oracle choice of change-points cannot attain faster rates for functions of smoothness order $>1$. 

In summary, we {find} that the multiscale change-point segmentation methods with universal parameter choice~\eqref{eq:defQ} or refined choice~\eqref{eq:refineQ} are minimax optimal {(up to log factors)} simultaneously over sequences of step function classes $\mathcal{S}_L(k_n)$ ($k_n = o(n), \, L > 0$), and over approximation spaces $\A^{\gamma}_{q,L}$ ($0< \gamma \le 1,\, 2\le q\le \infty, L >0$). This in particular includes  sequences of step function classes $\mathcal{S}_L(n^{\theta})$ ($0 \le \theta < 1, \, L > 0$), H\"{o}lder classes $H^{\alpha}_{L}$ and $H^{\alpha}_{\kappa, L}$ ($0< \alpha \le 1, \, \kappa \in \N_0,\, L >0$), and bounded variation classes $\BV_L$ ($L > 0$). 

\section{Feature detection}\label{s:feature}

The convergence rates {in Theorems~\ref{th:step} and~\ref{th:approx}} not only reflect the average performance in recovering the truth over its domain, but also, as a byproduct, lead to further statistical justifications on detection of features, such as change-points, modes and troughs.

\begin{proposition}\label{pp:feature}
Assume model~\eqref{eq:model} and let the truth $f \equiv f_{k_n}\in\mathcal{S}_L(k_n)$ be a sequence of step functions with up to $k_n$ jumps. By $\Delta_n$ and $\lambda_n$ denote the smallest jump size, and the smallest segment length of $f_{k_n}$, respectively.
Let $\hat{f}_n$ be a multiscale change-point segmentation method in Definition~\ref{def:mrseg} with constants $c$, $\delta$, interval system $\mathcal{I}$, and universal threshold $\thd$ in~\eqref{eq:defQ} or~\eqref{eq:refineQ}.  If $\lim_{n\to \infty}{k_n\log n}/({\lambda_n \Delta_n^2n}) = 0, $ then there is a constant $C$ depending only on $c$, $\thd$ and $k_n$ such that 
\[
\lim_{n\to \infty}\Prob[B]{\#J(\hat{f}_n) = \#J(f_{k_n}),\, d\bigl(J(\hat{f}_n); J(f_{k_n})\bigr) \le C \frac{k_n\log n}{\Delta_n^2 n} } = 1,
\]
with $d\bigl(J(\hat{f}_n); J(f_{k_n})\bigr) \coloneqq \max_{\tau \in J(f_{k_n})}\min_{\hat{\tau} \in J(\hat{f}_n)} \abs{\tau -\hat\tau}$.
\end{proposition}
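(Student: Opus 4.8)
The plan is to derive both assertions from Theorem~\ref{th:step} together with the overestimation control already present in~\eqref{eq:mr_segment}. Write $k\coloneqq\#J(f_{k_n})\le k_n$ and $J(f_{k_n})=\{\tau_1<\dots<\tau_k\}$. Since $\lambda_n\le 1/(k_n+1)$ and $\Delta_n\le 2L$, the hypothesis $k_n\log n/(\lambda_n\Delta_n^2 n)\to0$ already forces $k_n=o(\sqrt{n/\log n})=o(n)$, so Theorem~\ref{th:step} applies. I would then work on the event $G_n$ on which simultaneously (a) $\#J(\hat f_n)\le k$ and (b) $\norm{\hat f_n-f_{k_n}}_{L^2}^2\le\rho_n^2$, where $\rho_n^2\coloneqq C_0\,k_n\log n/n$ for a suitable deterministic constant $C_0$. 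Part (a) holds with probability at least $1-\beta_n$ by the overestimation bound $\mathbb{P}\{\#J(\hat f_n)\le\#J(f)\}\ge1-\beta_n$ from Section~\ref{s:method}, where $\beta_n\to0$ for both admissible threshold rules (immediate for~\eqref{eq:refineQ}; for~\eqref{eq:defQ} it follows from Shao's theorem~\citep{Sha95}, since $a>\delta+\sigma\sqrt2$); part (b) holds with probability tending to one by Theorem~\ref{th:step} applied with $p=2$. Hence $\mathbb{P}(G_n)\to1$, and it suffices to establish both assertions on $G_n$ for all large $n$.

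Next I would set the localization radius $\varepsilon_n\coloneqq 3\rho_n^2/\Delta_n^2=3C_0\,k_n\log n/(n\Delta_n^2)$ and note that the hypothesis says precisely $\varepsilon_n/\lambda_n\to0$; thus for large $n$ the half-open intervals $N_j\coloneqq[\tau_j-\varepsilon_n,\tau_j+\varepsilon_n)$, $j=1,\dots,k$, are pairwise disjoint subintervals of $[0,1)$, on which $f_{k_n}\equiv c_{j-1}$ on $[\tau_j-\varepsilon_n,\tau_j)$ and $f_{k_n}\equiv c_j$ on $[\tau_j,\tau_j+\varepsilon_n)$ with $\abs{c_j-c_{j-1}}\ge\Delta_n$. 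The key step, performed on $G_n$, is a dichotomy: if $J(\hat f_n)\cap N_j=\emptyset$ for some $j$, then $\hat f_n$ is constant (say $\equiv\hat c$) on all of $N_j$, so that $\norm{\hat f_n-f_{k_n}}_{L^2}^2\ge\varepsilon_n\bigl[(\hat c-c_{j-1})^2+(\hat c-c_j)^2\bigr]\ge\varepsilon_n\Delta_n^2/2=3\rho_n^2/2$, contradicting (b). Therefore every $N_j$ contains at least one jump of $\hat f_n$, and disjointness gives $\#J(\hat f_n)\ge k$; with (a) this yields $\#J(\hat f_n)=k$, whence each $N_j$ contains exactly one point $\hat\tau_{(j)}$ of $J(\hat f_n)$. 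Finally $\min_{\hat\tau\in J(\hat f_n)}\abs{\tau_j-\hat\tau}\le\abs{\tau_j-\hat\tau_{(j)}}\le\varepsilon_n$ for every $j$, i.e.\ $d\bigl(J(\hat f_n);J(f_{k_n})\bigr)\le\varepsilon_n=C\,k_n\log n/(n\Delta_n^2)$ with $C\coloneqq 3C_0$, which is the asserted statement.

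I do not expect a conceptual obstacle: the argument merely converts the $L^2$-rate of Theorem~\ref{th:step} into a jump-localization statement. The one point needing care is the quantitative matching of scales — the radius $\varepsilon_n$ must be small enough relative to $\lambda_n$ to keep the $N_j$ disjoint (exactly the role of the hypothesis) while still $\gg 1/n$ so as to be meaningful on the sampling grid (automatic, since $\varepsilon_n\gtrsim\log n/n$) — together with a check that the constant of Theorem~\ref{th:step} can be taken valid along the fixed sequence $f_{k_n}$, including at the boundary value $a=\delta+\sigma\sqrt{2r+4}$ allowed in~\eqref{eq:defQ}. I would also remark that feeding the multiscale side constraint for $\hat f_n$ directly into a $c$-normal pair of subintervals of $[\tau_j-\varepsilon_n,\tau_j)$ and $[\tau_j,\tau_j+\varepsilon_n)$ sharpens the radius to $\varepsilon_n\asymp\log n/(n\Delta_n^2)$, removing the factor $k_n$; the shorter $L^2$-route above already gives the bound as stated.
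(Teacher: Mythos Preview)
Your argument is correct and follows the same two-ingredient structure as the paper's own proof: the $L^2$-rate from Theorem~\ref{th:step} together with the overestimation control $\Prob{\#J(\hat f_n)>\#J(f_{k_n})}\to0$. The only difference lies in how the $L^2$-rate is converted into a change-point localization bound. The paper simply invokes \citet[Theorem~8]{LSRT16}, which packages precisely this implication (an $L^2$-error bound for a piecewise-constant estimator of a step signal forces every true jump to lie near an estimated one), and then concludes in one line; you instead spell out the elementary dichotomy directly (if $J(\hat f_n)\cap N_j=\emptyset$, the local $L^2$-contribution on $N_j$ already exceeds the global budget $\rho_n^2$). Your route is self-contained and makes the mechanism transparent, while the paper's is terser but leans on an external reference whose proof is essentially the same computation you wrote down. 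Your closing remark---that feeding the multiscale constraint into a $c$-normal pair of subintervals on either side of $\tau_j$ removes the factor $k_n$ from the radius---is also correct and goes beyond what the paper records here.
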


\begin{proof}
By Theorem~\ref{th:step} and~\citet[Theorem 8]{LSRT16} it holds almost surely that $d\bigl(J(\hat{f}_n); J(f_{k_n})\bigr) \le C_1 {k_n\log n}/(\Delta_n^2 n)$, and thus $\Prob[b]{\#J(\hat{f}_n) \ge \#J(f_{k_n})} \to 1$. This, together with the fact that $\Prob[b]{\#J(\hat f_n) > \#J(f_{k_n})} \le \mathcal{O}(n^{-r}) \to 0$, completes the proof. 
\end{proof}

\begin{remark}
Proposition~\ref{pp:feature} concerns step functions, and is a typical consistency result in change-point literature {\citep[e.g.][]{BoyKemLieMunWit09,HarLev10,ChCh17}}. It in particular applies to SMUCE~\citep{FriMunSie14} and FDRSeg~\citep{LMS16}, where the same error rate on the accuracy of estimated change-points is reported, and is of the fastest order known up to now~\citep[see also][]{Fry14}. 
\end{remark}

Assume now $f \in \mathcal{D}$, an arbitrary (not necessarily piecewise constant) function. We consider a similar concept of change-points as for step functions. To this end, we define, for any $\varepsilon > 0$,  the jump locations of~$f$ as $J_{\varepsilon}(f)\coloneqq\set{x}{\abs{f(x) - f(x-0)} > \varepsilon}$, and the jump sizes as $\Delta_f^\varepsilon \coloneqq \min\{\abs{f(x) - f(x-0)}\,\colon\,x \in J_\varepsilon(f)\}$. Note that they are well-defined, since it follows from \citet[Lemma 1 in Section 12]{Bil99} that $\#J_{\varepsilon}(f) < \infty$ and $\Delta_f^\varepsilon> 0$. In addition, we introduce the local mean of $f$  over an interval $I$ as $m_I(f)\coloneqq \int_{I}f(x)dx/\abs{I}$. Such local means $m_I(f)$ on different intervals $I$ actually shed light on the shape of $f$, such as pieces of increases and decreases, and thus modes and troughs.

\begin{theorem}\label{th:feature}
Assume model~\eqref{eq:model}, and the truth $f\in\D$.  Let $\hat{f}_n$ be a multiscale change-point segmentation method in Definition~\ref{def:mrseg} with constants $c$, $\delta$, and interval system $\mathcal{I}$. 
\begin{itemize}
\item[\emph{(i)}] 
If $f \in \A^{\gamma}_{2,L}$ with $\gamma, L > 0$, and the threshold $\thd$ of $\hat f_n$ is chosen as in~\eqref{eq:defQ} or~\eqref{eq:refineQ}, then
\begin{align}
&\lim_{n\to \infty}\Prob[a]{\#\mathrm{modes}(\hat f_n) \ge \#\mathrm{modes}(f);\,\#\mathrm{troughs}(\hat f_n) \ge \#\mathrm{troughs}(f)} = 1, \label{eq:mt} \\
\text{and}\quad& \lim_{n\to\infty}\Prob[g]{d\bigl(J(\hat f_n), J_\varepsilon(f)\bigr) \le \frac{C}{(\Delta^\varepsilon_f)^2}\Bigl(\frac{\log n}{n}\Bigr)^{\frac{2\gamma}{2\gamma+1}};\,\#J(\hat{f}_n) \ge \#J_\varepsilon(f)}=1,\label{eq:jmp} 
\end{align}
where $C$ is a constant depending only on $\eta$, $c$ and $L$. 
\item[\emph{(ii)}]
If the threshold $\thd$ of $\hat f_n$ is chosen as $\thd = \thd(\beta)$, then it holds with probability at least $(1-\beta)$ that for any $I_1, I_2\in \mathcal{I}$, where $\hat f_n$ is constant, 
\begin{equation}\label{eq:sft}
m_{I_1}(\hat f_n) > m_{I_2}(\hat f_n) + r_{I_1} + r_{I_2} \qquad \text{with } r_I = \frac{2\bigl(\thd(\beta)+s_{I}\bigr)}{\sqrt{n\abs{I}}}, 
\end{equation}
implies  $m_{I_1}(f) > m_{I_2}(f)$, simultaneously over all such pairs of $I_1$ and $I_2$. 
\end{itemize}
\end{theorem}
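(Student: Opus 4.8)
The plan is to treat the two parts separately, with part (ii) serving as the crucial deterministic building block that also feeds into part (i). For part (ii), I would start from the defining property of $\hat f_n$: since $\hat f_n$ satisfies the side constraint $T_{\mathcal I}(y^n;\hat f_n)\le\thd(\beta)$, for every interval $I\in\mathcal I$ on which $\hat f_n$ is constant with value $c_I$, we have $\frac{1}{\sqrt{n|I|}}\bigl|\sum_{i/n\in I}(y_i^n-c_I)\bigr|\le\thd(\beta)+s_I$. Substituting $y_i^n=\bar f_i^n+\xi_i^n$ and rearranging, this bounds $|c_I - m_I(f)|$ by $\frac{\thd(\beta)+s_I}{\sqrt{n|I|}}$ plus the normalized noise term $\frac{1}{\sqrt{n|I|}}|\sum_{i/n\in I}\xi_i^n|$. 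The key event is that the noise-only multiscale statistic is controlled: on the event $\{T_{\mathcal I}(\xi^n;0)\le\thd(\beta)\}$, which by definition of the quantile has probability at least $1-\beta$, that noise term is itself at most $\thd(\beta)+s_I$. Hence on this event $|c_I-m_I(f)|\le r_I/1 \cdot$ (a constant); tracking the factor of $2$ carefully gives exactly $|m_I(\hat f_n)-m_I(f)|=|c_I-m_I(f)|\le r_I$ with $r_I=2(\thd(\beta)+s_I)/\sqrt{n|I|}$ — note $m_I(\hat f_n)=c_I$ since $\hat f_n\equiv c_I$ on $I$. Then a triangle-inequality argument finishes it: if $m_{I_1}(\hat f_n)>m_{I_2}(\hat f_n)+r_{I_1}+r_{I_2}$, then $m_{I_1}(f)\ge m_{I_1}(\hat f_n)-r_{I_1}>m_{I_2}(\hat f_n)+r_{I_2}\ge m_{I_2}(f)$, and the bound is simultaneous over all admissible pairs because the single event $\{T_{\mathcal I}(\xi^n;0)\le\thd(\beta)\}$ controls all intervals at once.

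For part (i), the strategy is to convert the $L^p$ (here $L^2$) rate of Theorem \ref{th:approx} into a uniform sup-norm-type control on constant segments of $\hat f_n$, and then argue combinatorially about signs of segment means. The starting observation is that on each segment $[\hat\tau_j,\hat\tau_{j+1})$ of $\hat f_n$ the estimator is constant, so its deviation from the local mean of $f$ on that segment is controlled exactly as in part (ii) (now with the universal $\thd$), while Theorem \ref{th:approx} controls the aggregate $L^2$ error; combining these — using that a segment carrying large $L^2$ error must be short, which in turn sharpens the pointwise bound via the $1/\sqrt{n|I|}$ scaling — yields that every sufficiently long segment mean of $\hat f_n$ is within $o(1)$ (in fact within the stated rate) of the corresponding true local mean. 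For the mode/trough count \eqref{eq:mt}: at a genuine mode of $f$ there are points to its left and right where $f$ is strictly smaller by a fixed positive amount (this is where one uses that $f\in\A^\gamma_{2,L}$ gives a bona fide c\`adl\`ag function with well-defined local structure, and that modes are defined via strict inequalities of local means on $\mathcal I$-intervals straddling the mode); once $n$ is large the rate term is below that fixed gap, so by part (ii)'s mechanism $\hat f_n$ must also exhibit a strict increase then decrease of its segment means across that region, forcing at least one mode of $\hat f_n$ there; distinct modes of $f$ are separated, so these are distinct modes of $\hat f_n$, giving $\#\mathrm{modes}(\hat f_n)\ge\#\mathrm{modes}(f)$, and symmetrically for troughs. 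For the jump statement \eqref{eq:jmp}: this is the analogue of Proposition \ref{pp:feature} but for $\varepsilon$-jumps of a general $f$; I would invoke the same localization lemma used there (\citet[Theorem 8]{LSRT16} or an analogous argument) with the approximation-space rate $n^{-2\gamma/(2\gamma+1)}$ in place of the parametric one, noting that near each point of $J_\varepsilon(f)$ the function $f$ is, after subtracting the jump, close to a constant on a shrinking window, so the true signal locally looks like a step of height $\ge\Delta_f^\varepsilon$; the detection window then scales like $\frac{1}{(\Delta_f^\varepsilon)^2}(\log n/n)^{2\gamma/(2\gamma+1)}$, and the lower bound $\#J(\hat f_n)\ge\#J_\varepsilon(f)$ follows because each of the $\#J_\varepsilon(f)$ separated jump locations forces a distinct change-point of $\hat f_n$ within its window.

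The main obstacle I anticipate is the transition in part (i) from a global $L^2$ bound to the local, segment-wise strict sign comparisons needed for the mode/trough count: one must rule out that $\hat f_n$ "misses" a mode because it happens to place a single long constant segment across the entire hump of $f$. Handling this requires using the built-in parsimony together with the side constraint — specifically, if $\hat f_n$ were constant on an interval $I$ straddling a mode of $f$ whose local means differ by more than $2(\thd+s_I)/\sqrt{n|I|}$ on the relevant sub-intervals of $\mathcal I$ (which holds for large $n$ because $f$ has a fixed-size mode and $\mathcal I$ is $c$-normal, hence rich enough to contain witnessing intervals of comparable length), then the multiscale test on one of those sub-intervals would be violated, contradicting feasibility of $\hat f_n$. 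Making the quantitative relationship between the $\A^\gamma_{2,L}$-rate, the $c$-normality of $\mathcal I$, and the fixed gaps at modes of $f$ fully rigorous — in particular verifying that $\mathcal I$ supplies intervals long enough that the threshold term $2(\thd+s_I)/\sqrt{n|I|}\asymp\sqrt{\log n}/\sqrt{n|I|}$ is eventually smaller than the gap — is the delicate part; everything else is bookkeeping with triangle inequalities and the separation of features of the fixed limiting function $f$.
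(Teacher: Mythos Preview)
Your treatment of part~(ii) is correct and essentially identical to the paper's: on the event $\{T_{\mathcal I}(\xi^n;0)\le\thd(\beta)\}$ both $f$ and $\hat f_n$ satisfy the multiscale constraint on every $I\in\mathcal I$ where $\hat f_n$ is constant, giving $|m_I(\hat f_n)-m_I(f)|\le r_I$, and the triangle inequality finishes.

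For part~(i), however, your approach is more complicated than necessary and contains a genuine gap. The paper's key observation is the elementary Cauchy--Schwarz bound
\[
|I|^{1/2}\,\bigl|m_I(\hat f_n)-m_I(f)\bigr| \;\le\; \Bigl(\int_I|\hat f_n-f|^2\Bigr)^{1/2} \;\le\; \|\hat f_n-f\|_{L^2},
\]
which holds for \emph{every} interval $I$, not just those on which $\hat f_n$ is constant. Plugging in the $L^2$-rate from Theorem~\ref{th:approx} immediately gives uniform control of $|m_I(\hat f_n)-m_I(f)|$ on any fixed finite collection of intervals. For \eqref{eq:mt} one simply fixes intervals $I_1<\cdots<I_m$ witnessing the up--down pattern of $f$'s modes and troughs; since the local means of $\hat f_n$ converge to those of $f$, the same sign pattern eventually holds for $\hat f_n$. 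For \eqref{eq:jmp} one applies the same bound to the shrinking intervals $[x-\lambda,x)$ and $[x,x+\lambda)$ with $\lambda=\min\{d(J(\hat f_n),J_\varepsilon(f)),\delta_n\}$: since $\hat f_n$ is constant on their union, its means there coincide, while $f$'s means differ by at least $\Delta_f^\varepsilon/2$; hence one of the two deviations is $\ge\Delta_f^\varepsilon/4$, and the Cauchy--Schwarz bound forces $\lambda\lesssim(\Delta_f^\varepsilon)^{-2}(\log n/n)^{2\gamma/(2\gamma+1)}$. No appeal to \citet{LSRT16} is needed.

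Your alternative route---a feasibility contradiction showing $\hat f_n$ cannot be constant across a mode of $f$---does not close: ruling out constancy on a region does not by itself force $\hat f_n$ to have a \emph{mode} there (it could, e.g., be monotone across the hump). Moreover, invoking ``part~(ii)'s mechanism'' to go from a mode of $f$ to a mode of $\hat f_n$ is the wrong direction: part~(ii) transfers strict inequalities of $\hat f_n$'s means to $f$, not vice versa. The Cauchy--Schwarz route sidesteps both issues because it compares local means on \emph{fixed} intervals chosen to witness $f$'s features, without any constraint on where $\hat f_n$'s change-points fall.
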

\begin{proof}
See Appendix~\ref{app:feature}. 
\end{proof}
\begin{remark}\label{rem:ft}
Since step functions lie in $\mathcal{A}_{2}^\gamma$ for all $\gamma >0$, assertion~\eqref{eq:jmp} ``formally'' reproduces  Proposition~\ref{pp:feature} partially for the case that the step function $f$ is fixed, by letting $\gamma$ tend to infinity. Moreover, the statistical justifications of Theorem~\ref{th:feature}~(i) are of one-sided nature. Note that statistical guarantees for the reverse order are in general not possible, as long as an arbitrary number of jumps / features on small scales cannot be excluded, see e.g.~\cite{Don88}. However, multiscale change-point segmentation methods will not include too many artificial features (e.g., jumps, modes or troughs), due to their parsimony nature by construction, namely, minimization of the number of jumps, see~\eqref{eq:mr_segment}. 

More importantly, Theorem~\ref{th:feature}~(ii) states that large increases (or decreases) of multiscale change-point segmentation estimators imply increases (or decreases) of the true signal. This is actually a finite-sample inference guarantee, and holds simultaneously for many intervals, which thus provides inference guarantee on modes and troughs. In this way, we can discern a collection of genuine features among all the detected features, with controllable confidence. 
To be precise, let  
$\hat f_n = \sum_{i = 1}^{\hat k} \hat c_i \Ind_{[\hat \tau_{i-1},\,\hat \tau_i)} \text{ with }0 = \hat\tau_0 < \cdots < \hat \tau_{\hat k} = 1\text{ and }\hat c_i \neq \hat c_{i+1}$ 
be a multiscale change-point segmentation estimator with threshold $\eta(\beta)$. 
\begin{enumerate}[(i)]
\item{\emph{Increase or decrease.}}
Let $\hat\tau_{i+1/2} = (\hat\tau_i+\hat\tau_{i+1})/2$. Define
\begin{align*}
&u^R_i = \min_{I \in \I,\,  I \subseteq [\hat\tau_i, \,\hat\tau_{i+1/2})} (\hat c_{i+1} + r_I), &&l^R_i = \max_{I \in \I,\,  I \subseteq [\hat\tau_i, \,\hat\tau_{i+1/2})} (\hat c_{i+1} - r_I),\\
\text{and } \quad & u^L_i = \min_{I \in \I,\,  I \subseteq [\hat\tau_{i-1/2}, \,\hat\tau_{i})} (\hat c_i + r_I), &&l^L_i = \max_{I \in \I,\,  I \subseteq [\hat\tau_{i-1/2}, \,\hat\tau_{i})} (\hat c_i - r_I).
\end{align*}
Then, by Theorem~\ref{th:feature}~(ii), there is at least an increase (or a decrease) of $f$ on interval $[\hat\tau_{i-1/2},\,\hat\tau_{i+1/2})$ if $u_i^L < l_i^R$ (or if  $l_i^L > u_i^R$) with confidence level no less than ($1-\beta$). Further, because of the simultaneous confidence control, the inferred increases and decreases on non-overlapped intervals $[\hat\tau_{i-1/2},\hat\tau_{i+1/2})$ leads naturally to inference on modes and troughs. 
\item{\emph{Change-point.}}
Let $m \ll n$. 
Consider intervals 
$$
[\hat c_{i} - r_{[\hat\tau_i - m/n, \, \hat \tau_i)},\, \hat c_{i} + r_{[\hat\tau_i - m/n, \, \hat \tau_i)}] \text { and }
[\hat c_{i+1} - r_{[\hat\tau_i , \, \hat \tau_i+m/n)},\, \hat c_{i+1} + r_{[\hat\tau_i, \, \hat \tau_i +m/n)}]\,. 
$$
If both intervals are disjoint, we call $\hat\tau_i$ a significant change-point. Strictly speaking, in such a case, it follows from Theorem~\ref{th:feature}~(ii) only that 
\begin{equation}\label{eq:cpt}
m_{[\hat\tau_i - m/n, \, \hat \tau_i)}(f) \neq m_{[\hat\tau_i , \, \hat \tau_i+m/n)} (f)
\end{equation}
with confidence level at least ($1-\beta$). However, for a fixed $f$, when $n$ is large enough, \eqref{eq:cpt} will imply $f(\tau_i) = f(\tau_i + 0) \neq f(\tau_i - 0)$, i.e., a jump of $f$ at $\tau_i$, for some $\tau_i$. Thus, a significant change-point in most cases leads to a true change-point. In practice, we recommend $m = \lfloor \log n\rfloor$ as the default choice. 
\end{enumerate}
See Figure~\ref{fig:feature} (in Section~\ref{s:intro}) for an illustration. The SMUCE has detected $3$ change-points, $1$ mode and $1$ trough. By the method described above, we can claim that the truth has at least $1$ mode (in region $[0.36, \,0.88)$), $1$ trough (in region $[0.1,\,0.63)$) and $2$ change-points (at $0.5$ and $0.75$), with probability at least $90\%$. Such inference is nicely confirmed by the underlying truth.
\end{remark}

\section{Oracle properties}\label{s:oracle}

This section focuses on the oracle properties of multiscale change-point segmentation methods. For simplicity, we restrict ourselves to $\A^\gamma_{2}$ and $L^2$-topology.  

\subsection{Oracle segmentation}\label{ss:oSeg} 
It is well-known that the crucial difficulty in change-point segmentation problems is to infer the locations of change-points; Once the change-point locations are detected, the height of each segment can easily be determined via any reasonable estimator, e.g.~{a} maximum likelihood estimator, locally on each segment \citep[see e.g.][]{KillFeaEck12,Fry14}. In line of this thought, we define 
$$
\Pi_n \coloneqq \set{(\tau_0,\tau_1,\ldots,\tau_{k})}{\tau_0 = 0 < \tau_1< \cdots < \tau_k =1,\, k\in\N,\text{ and } \{n\tau_i\}_{i = 1}^k \subseteq \N}.
$$ 
For each $\tau \equiv (\tau_0,\ldots,\tau_k)$, we introduce the piecewise constant segmentation estimator~$\hat f_{\tau,n}$, conditioned on $\tau$, for model~\eqref{eq:model} as 
$$
\hat f_{\tau,n} \coloneqq \sum_{i =1}^k \hat c_i \Ind_{[\tau_{i-1},\tau_i)} \qquad \text{with }\hat c_i \coloneqq \frac{\sum_{j\in [n\tau_{i-1}, n\tau_i)} y^n_j}{n(\tau_i - \tau_{i-1})}.
$$
\begin{theorem}\label{th:oSeg}
Assume model~\eqref{eq:model}, and sub-Gaussian noises s.t.~$\E{(\xi_i^n)^2}\asymp \sigma_0^2$, i.e., for some constants $c_1,c_2$ it holds that $c_1 \sigma_0^2\le\E{(\xi_i^n)^2}\le c_2 \sigma_0^2$ for every possible $i$ and $n$.  Let $\hat{f}_n$ be a multiscale change-point segmentation method in Definition~\ref{def:mrseg} with threshold as in~\eqref{eq:defQ} or~\eqref{eq:refineQ}. Then, there is a {universal} constant $C$ such that for every $f$ in $\cup_{\gamma > 0}\A^\gamma_2 \cap L^\infty$
\[
\E{\norm{\hat f_n -f}^2_{L^2}} \le C \log n \inf_{\tau \in \Pi_n} \E{\norm{\hat f_{\tau,n} - f}_{L^2}^2}\qquad \text{for sufficiently large } n.
\]
\end{theorem}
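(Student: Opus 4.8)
The plan is to reduce the claimed oracle inequality to three separate estimates, each phrased through the approximation numbers $\Delta_{2,k}(f)$ of~\eqref{eq:approxErr}, and then to chain them. Concretely, I would establish: (i) a lower bound $\inf_{\tau\in\Pi_n}\E[a]{\norm{\hat f_{\tau,n}-f}_{L^2}^2}\ge c_3\inf_{k\ge0}\bigl(\Delta_{2,k}(f)^2+(k+1)/n\bigr)$ with a constant $c_3=c_3(c_1,\sigma_0)>0$; (ii) an upper bound $\E[a]{\norm{\hat f_n-f}_{L^2}^2}\le C_1\inf_{k\ge1}\bigl(\Delta_{2,k}(f)^2+k\log n/n\bigr)$ for all large $n$, with $C_1$ independent of $f$ except through $\norm{f}_{L^\infty}$; and (iii) the arithmetic inequality $\inf_{k\ge1}\bigl(\Delta_{2,k}(f)^2+k\log n/n\bigr)\le\log n\cdot\inf_{k\ge0}\bigl(\Delta_{2,k}(f)^2+(k+1)/n\bigr)$, valid for $n\ge3$. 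Combining (iii)$\,\to\,$(ii)$\,\to\,$(i) then yields the statement with $C=C_1/c_3$.

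Step (i) is a bias--variance computation. Fixing $\tau=(\tau_0,\dots,\tau_k)\in\Pi_n$, since $\E[a]{y_j^n}=\bar f_j^n$ and the cell averages $\bar f_j^n$ reconstitute $\int f$, one has $\E[a]{\hat c_i}=m_{[\tau_{i-1},\tau_i)}(f)$, so $\E[a]{\hat f_{\tau,n}}$ is exactly the $L^2$-projection of $f$ onto step functions constant on the $\tau$-cells, a function with at most $k-1$ jumps; hence the squared bias is $\ge\Delta_{2,k-1}(f)^2$. By independence of the $\xi_j^n$ and $\E[a]{(\xi_j^n)^2}\ge c_1\sigma_0^2$, one gets $\mathrm{Var}(\hat c_i)\ge c_1\sigma_0^2/(n(\tau_i-\tau_{i-1}))$, whence the variance term $\sum_{i=1}^k\mathrm{Var}(\hat c_i)(\tau_i-\tau_{i-1})$ is $\ge c_1\sigma_0^2 k/n$; infimizing over $\tau$ with $k$ free in $\N$ gives (i). Step (iii) is two lines: let $k_0$ attain (or nearly attain) the right-hand infimum; if $k_0\ge1$ take $k=k_0$ on the left and use $\Delta_{2,k_0}^2\le(\log n)\Delta_{2,k_0}^2$, and if $k_0=0$ take $k=1$ and use $\Delta_{2,1}\le\Delta_{2,0}$ together with $\log n\ge1$.

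The substance is Step (ii). Since $f\in\A^\gamma_2\cap L^\infty$ for some unknown $\gamma>0$, this is not literally Theorem~\ref{th:approx}; rather I would use the sharper oracle--approximation bound that the proof of Theorem~\ref{th:approx} in Appendix~\ref{app:approx} establishes en route (and which is also recorded as Proposition~\ref{pp:bestApp}). Its mechanism: on the event $\mathcal E_n=\{\,\sup_{I\in\I}\bigl(\tfrac{1}{\sqrt{n\abs{I}}}\abs{\sum_{i/n\in I}\xi_i^n}-s_I\bigr)<\thd\,\}$, which by a sub-Gaussian maximal inequality~\citep{Sha95} has complement of probability $O(n^{-r})$ for the threshold~\eqref{eq:defQ} (or~\eqref{eq:refineQ}), feasibility of $\hat f_n$ in~\eqref{eq:mr_segment} together with the richness of the $c$-normal system $\I$ bounds $\norm{\hat f_n-f}_{L^2}^2$, for every $k\ge1$, against the approximation term $\Delta_{2,k}(f)^2$ plus a fluctuation term of order $k\log n/n$ stemming from the local deviations on the $\lesssim k$ relevant scales; off $\mathcal E_n$, the uniform control of $\norm{f}_{L^\infty}$ and $\norm{\hat f_n}_{L^\infty}$ makes the contribution $O(n^{1-r})$, of lower order. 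Minimising over $k$ yields (ii) with $C_1$ depending only on $\sigma,c,\delta,r$ and $\norm{f}_{L^\infty}$, and crucially not on $\gamma$.

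The main obstacle is precisely this uniformity in Step (ii): the oracle--approximation bound has to hold with a single constant across all smoothness orders $\gamma>0$ simultaneously, so the $\gamma$-dependent rate of Theorem~\ref{th:approx} cannot simply be quoted. A subsidiary issue is that $\Delta_{2,k}(f)$ allows off-grid breakpoints while both $\hat f_n$ and the oracle $\hat f_{\tau,n}$ only use breakpoints on $\{i/n\}$; this loses at most a constant and, importantly, it enters only the \emph{lower} bound (i) harmlessly, since restricting breakpoints to the grid can only increase the projection error there. Granting (ii), the bias--variance bound (i), the arithmetic step (iii), and the assembling of constants are routine.
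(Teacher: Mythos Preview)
Your three-step decomposition is correct and gives a genuinely different organization than the paper's proof. The paper works directly with the oracle minimizer $\tau_*$: it first computes $\E[a]{\norm{\hat f_{\tau,n}-f}_{L^2}^2}=\norm{s_\tau-f}_{L^2}^2+\sigma_0^2\#\tau/n$, then proves by contradiction (refining $\tau_*$ by a diverging factor $m$ and exploiting $f\in\A_2^\gamma$) that at the minimizer the squared bias is dominated by the variance, $\norm{s_{\tau_*}-f}_{L^2}^2\le C\sigma_0^2\#\tau_*/n$. This balance is exactly what makes a balanced step-function approximant built from $s_{\tau_*}$ feasible for the multiscale constraint, after which the comparison argument from Appendix~\ref{app:approx} bounds $\norm{\hat f_n-f}_{L^2}^2$ by a $\log n$ multiple of the oracle risk. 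Your route instead passes through $\Delta_{2,k}$: step~(i) is the same bias--variance identity read as a lower bound, and step~(ii) replaces the paper's contradiction claim by directly choosing $k^\dagger=\min\{k:\Delta_{2,k}(f)^2\le C_0 k\log n/n\}$ with $C_0=(a-a_0)^2$, which simultaneously certifies feasibility of the balanced approximant and lies within a constant of the infimum in~(ii). Both proofs hinge on the same balancing construction and the same feasibility check; what you gain is modularity (the intermediate inequality~(ii) is of independent interest), while the paper's argument is more self-contained and avoids introducing $k^\dagger$.

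Two imprecisions are worth flagging. First, your mechanism in~(ii) does not yield the bound ``for every $k\ge1$'' as stated: feasibility of the balanced $k$-piece approximant fails when $\Delta_{2,k}^2\gg k\log n/n$, so the comparison step cannot start. What actually holds is the bound at $k=k^\dagger$, and then the elementary fact that $k^\dagger\log n/n$ is comparable to the full infimum (since $\Delta_{2,k}$ is nonincreasing) gives~(ii). You should state it that way. Second, your claim of ``uniform control of $\norm{\hat f_n}_{L^\infty}$'' on the complement of $\mathcal E_n$ is not correct; $\hat f_n$ is only controlled by $\max_i|y_i^n|$ plus $O(\sqrt{\log n})$, which is random. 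The correct treatment (see step~c) in Appendix~\ref{app:step}) uses the deterministic bound $\norm{\hat f_n-f}_{L^2}^2\le 3\bigl((a+\delta)^2\log n+n^{-1}\sum_i(\xi_i^n)^2+4L^2\bigr)$, valid always because singleton intervals lie in $\I$, and then integrates against the tail $\Prob[a]{\mathcal E_n^c}=O(n^{-r})$; for $p=r=2$ this gives the claimed $O(n^{-1})$ contribution. Finally, Proposition~\ref{pp:bestApp} does not record the oracle--approximation bound you need in~(ii); it compares $\hat f_n$ to $f^{\mathrm{app}}_{\hat K_n}$, a different statement.
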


\begin{proof}
See Appendix~\ref{app:oSeg}. 
\end{proof}

\begin{remark}
Theorem~\ref{th:oSeg} states that multiscale change-point segmentation methods perform nearly (up to a log-factor) as well as the piecewise constant segmentation estimator using an oracle for the change-point locations.  
\end{remark}

We next consider the \emph{saturation phenomenon} of piecewise constant segmentation estimators via a simple example. 

\begin{example}\label{ex:sat}
Assume model \eqref{eq:model} with the truth $f(x) \equiv x$ and the noise $\xi_i^n$ being standard Gaussian. For simplicity, let $n = 6m^3$ with $m \in \N$.  Elementary calculation shows that 
$$
 \E{\norm{\hat f_{\tau_*,n} - f}_{L^2}^2} = \inf_{\tau \in \Pi_n} \E{\norm{\hat f_{\tau,n} - f}_{L^2}^2} = \frac{6^{2/3} + 6^{-1/3}}{12}n^{-2/3}
$$
and $\tau_* = \bigl(0, 1/m, \ldots, (m-1)/m, 1\bigr)$. Note that $f(x)\equiv x$ lies in every H\"older class $H^\alpha_L$ with $0< \alpha <\infty$ and $L\ge 1$, and that the minimax optimal rates in terms of squared $L^2$-risk for $H^\alpha_L$ is of order $n^{-2\alpha/(2\alpha + 1)}$. Thus, it indicates that the piecewise segmentation estimator even with the oracle choice of change-points saturates at smoothness order $\alpha =1$. This in turn explains why multiscale change-point segmentation methods cannot achieve faster rates for functions of smoothness order $\ge1$.
\end{example}

Note that such saturation phenomenon for piecewise constant segmentation estimators is by no means due to the discontinuity of the estimator. In fact, one could discretize a smooth estimator \citep[i.e., wavelet shrinkage estimators,][]{DonJRSS95} on the sample grids $\{i/n\}_{i = 0}^n$ into a piecewise constant one: the discretized version performs equally well as the original estimator in asymptotical sense, since the discretization error vanishes faster than statistical estimation error.  In contrast, the underlying reason for the aforementioned saturation is because piecewise constant segmentation estimators aim to segment data into constant pieces, rather than approximate the truth as well as possible. The purpose of segmentation into constant pieces provides an easy interpretation of the data, but it turns out to be less sufficient if the complete recovery of the truth is the statistical task. To overcome this saturation barrier, one could smoothen each segment based on detected change-point locations~\citep[see][]{BKS71}, which is, however, beyond the scope of this paper.

\subsection{Oracle approximant}\label{ss:oApp}

Here we examine the performance of multiscale change-point segmentation methods $\hat f_n$ by comparing it with the best piecewise constant approximants of $f$ with up to $\# J(\hat f_n)$ jumps. By means of compactness arguments and the convexity of $L^2$-norm, we can define 
\begin{equation}\label{eq:bestApp}
{f^\app_{k}} \in \mathop{\argmin}\limits_{g \in \mathcal{S}, \, \#J(g) \le k} \norm{f - g}_{L^2} \qquad \text{ for } k \in \N,
\end{equation}
which might be non-unique, as mentioned earlier in Section~\ref{ss:approx}. 
\begin{proposition}\label{pp:bestApp}
Assume model~\eqref{eq:model}. Let $\hat{f}_n$ be a multiscale change-point segmentation method in Definition~\ref{def:mrseg} with threshold as in~\eqref{eq:defQ} or~\eqref{eq:refineQ}, and $\hat K_n \coloneqq \# J(\hat f_n)$. Then
\[
\lim_{n\to \infty} \Prob[B]{\sup_{f \in \A_{2, L}^{\gamma}}\norm{f - f^\app_{\hat K_n}}_{L^2} \ge C\sup_{f \in \A_{2,L}^{\gamma}}\norm{f - \hat f_n}_{L^2} }=1\qquad \text{ for some constant } C. 
\]
\end{proposition}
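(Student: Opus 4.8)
The plan is to read off the worst-case rate of $\hat f_n$ over $\A^\gamma_{2,L}$ from Theorem~\ref{th:approx} and to match it from below by evaluating the left-hand side at a single, well chosen ``saturating'' signal. Set $\rho_n \coloneqq (\log n/n)^{\gamma/(2\gamma+1)}$ and $k_n \coloneqq \lceil (n/\log n)^{1/(2\gamma+1)}\rceil$, so that $k_n^{-\gamma}\asymp\rho_n$. First I would record two consequences of (the proof of) Theorem~\ref{th:approx} with $p=r=q=2$, for which $\min\{1/2,1/p\}=1/2$ and $(1/2-1/p)_+=0$: there are constants $C_1,C_2$ and events $\Omega_n$ with $\Prob{\Omega_n}\ge 1-\mathcal{O}(n^{-r})\to 1$ on which, simultaneously for every $f\in\A^\gamma_{2,L}$, one has both $\norm{\hat f_n-f}_{L^2}\le C_1\rho_n$ and the a priori bound $\hat K_n=\#J(\hat f_n)\le C_2 k_n$. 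The second bound is inherent to the argument behind Theorem~\ref{th:approx}: on the signal-independent high-probability event controlling the noise part of $T_\I$, the multiscale side-constraint in~\eqref{eq:mr_segment} is satisfied by a near-optimal piecewise constant approximant of $f$ using $\mathcal{O}(k_n)$ pieces, and $\hat f_n$ minimises $\#J$ over all feasible candidates.

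Next I would fix a deterministic signal $f_0\in\A^\gamma_{2,L}$ that \emph{saturates} the class, in the sense that $\Delta_{2,k}(f_0)\ge c_0 k^{-\gamma}$ for all $k\ge 1$, with $c_0=c_0(\gamma,L)>0$. For $\gamma=1$ one may take $f_0(x)=x$, for which $\Delta_{2,k}(f_0)\asymp k^{-1}$ (this is the deterministic bias term appearing in Example~\ref{ex:sat}). For $0<\gamma<1$ one takes a self-similar c\`{a}dl\`{a}g step function whose level-$j$ dyadic refinement adds $\asymp 2^j$ jumps of size $\asymp 2^{-j\gamma}$; an elementary scaling argument in the spirit of \citet{DeLo93} (omitting levels $\ge J$ costs $\asymp 2^{-J\gamma}$ in $L^2$, with $2^J\asymp k$) then yields $\Delta_{2,k}(f_0)\asymp k^{-\gamma}$ for all $k$, and $\norm{f_0}_{L^\infty}\le\sum_j 2^{-j\gamma}<\infty$. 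Rescaling $f_0$ if necessary puts it in $\A^\gamma_{2,L}$ for the prescribed $L$ while keeping the lower bound $\Delta_{2,k}(f_0)\gtrsim_{\gamma,L} k^{-\gamma}$.

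Then, on $\Omega_n$, using that $f_0$ is admissible in the supremum, that $\hat K_n\le C_2 k_n$ for the data generated by $f_0$, that $k\mapsto\Delta_{2,k}(f_0)$ is nonincreasing, and that $\norm{f_0-f^\app_{\hat K_n}}_{L^2}=\Delta_{2,\hat K_n}(f_0)$ by definition~\eqref{eq:bestApp}, I obtain
\[
\sup_{f\in\A^\gamma_{2,L}}\norm{f-f^\app_{\hat K_n}}_{L^2}\ \ge\ \Delta_{2,\hat K_n}(f_0)\ \ge\ \Delta_{2,C_2 k_n}(f_0)\ \ge\ c_0\,(C_2 k_n)^{-\gamma}\ \ge\ c_3\,\rho_n,
\]
while $\sup_{f\in\A^\gamma_{2,L}}\norm{f-\hat f_n}_{L^2}\le C_1\rho_n$ on the same event. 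Combining the two inequalities yields the claim with $C\coloneqq c_3/C_1$, since $\Prob{\Omega_n}\to 1$.

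\textbf{Main obstacle.} The crux is the a priori upper bound $\hat K_n=\mathcal{O}(k_n)$ used on $\Omega_n$: without it, the best approximant $f^\app_{\hat K_n}$ of $f_0$ could be arbitrarily accurate (indeed $\Delta_{2,\hat K_n}(f_0)=0$ as soon as $\hat K_n$ reaches the number of jumps of any step approximant of $f_0$), so the lower bound would be vacuous. This is exactly where the parsimony of the method — minimisation of $\#J$ in~\eqref{eq:mr_segment} — is essential, and it is extracted from the same feasibility estimate that drives Theorem~\ref{th:approx}. A secondary point requiring care is the verification that the self-similar $f_0$ genuinely realises $\Delta_{2,k}(f_0)\gtrsim k^{-\gamma}$ rather than a faster decay — note for instance that $f_0=x^\gamma$ with $\gamma<1$ would \emph{not} do, since it is smooth away from a single point, lies in $\A^1_2$, and has $\Delta_{2,k}\asymp k^{-1}$ — which is a standard but not entirely trivial fact from nonlinear approximation theory.
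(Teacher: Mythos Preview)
Your proposal is correct and follows essentially the same route as the paper: define a high-probability event (the paper calls it $A_n$) on which both $\sup_{f\in\A^\gamma_{2,L}}\norm{\hat f_n-f}_{L^2}\le C_2\rho_n$ and $\hat K_n\le k_n$ hold (both read off from the proof of Theorem~\ref{th:approx}), then lower-bound $\sup_f\norm{f-f^\app_{\hat K_n}}_{L^2}\ge\sup_f\norm{f-f^\app_{k_n}}_{L^2}\ge C_3 k_n^{-\gamma}\asymp\rho_n$ and combine. The only difference is cosmetic: the paper asserts the inequality $\sup_f\Delta_{2,k_n}(f)\ge C_3 k_n^{-\gamma}$ directly, whereas you supply the saturating witness $f_0$ explicitly; your cautionary remark that $x^\gamma$ would \emph{not} serve this purpose is well taken.
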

\begin{proof}
Following the proof of Theorem~\ref{th:approx} and Remark~\ref{rem:adapt2}, one can see that 
\begin{equation}\label{eq:prob_to_1}
\lim_{n\to\infty}\Prob[b]{A_n} = 1, 
\end{equation}
where the event  $A_n$ is defined as
\[
A_n \coloneqq \left\{\hat K_n \le k_n, \, \sup_{f \in \A^{\gamma}_{2,L}}\norm{f - \hat f_n}_{L^2}  \le C_2\bigl(\frac{\log n}{n}\bigr)^{\frac{\gamma}{2\gamma+1}}\right\}\quad \text{ with } k_n \coloneqq C_1 \bigl(\frac{n}{\log n}\bigr)^{\frac{1}{2\gamma+1}}. 
\]
{On} the event $A_n$, it holds that 
\begin{equation*}
\sup_{f \in \A^{\gamma}_{2,L}} \norm{f - f^\app_{\hat K_n}}_{L^2} \ge \sup_{f \in \A^{\gamma}_{2,L}} \norm{f - f^\app_{k_n}}_{L^2} \ge C_3 k_n^{-\gamma} \ge C_4 \bigl(\frac{\log n}{n}\bigr)^{\frac{\gamma}{2\gamma+1}} \ge C_5\sup_{f \in \A^{\gamma}_{2,L}} \norm{f - \hat f_n}_{L^2}.
\end{equation*}
This, together with~\eqref{eq:prob_to_1}, concludes the proof.
\end{proof}

\begin{figure}[!h]
\centering
\includegraphics[width=0.9\textwidth,clip]{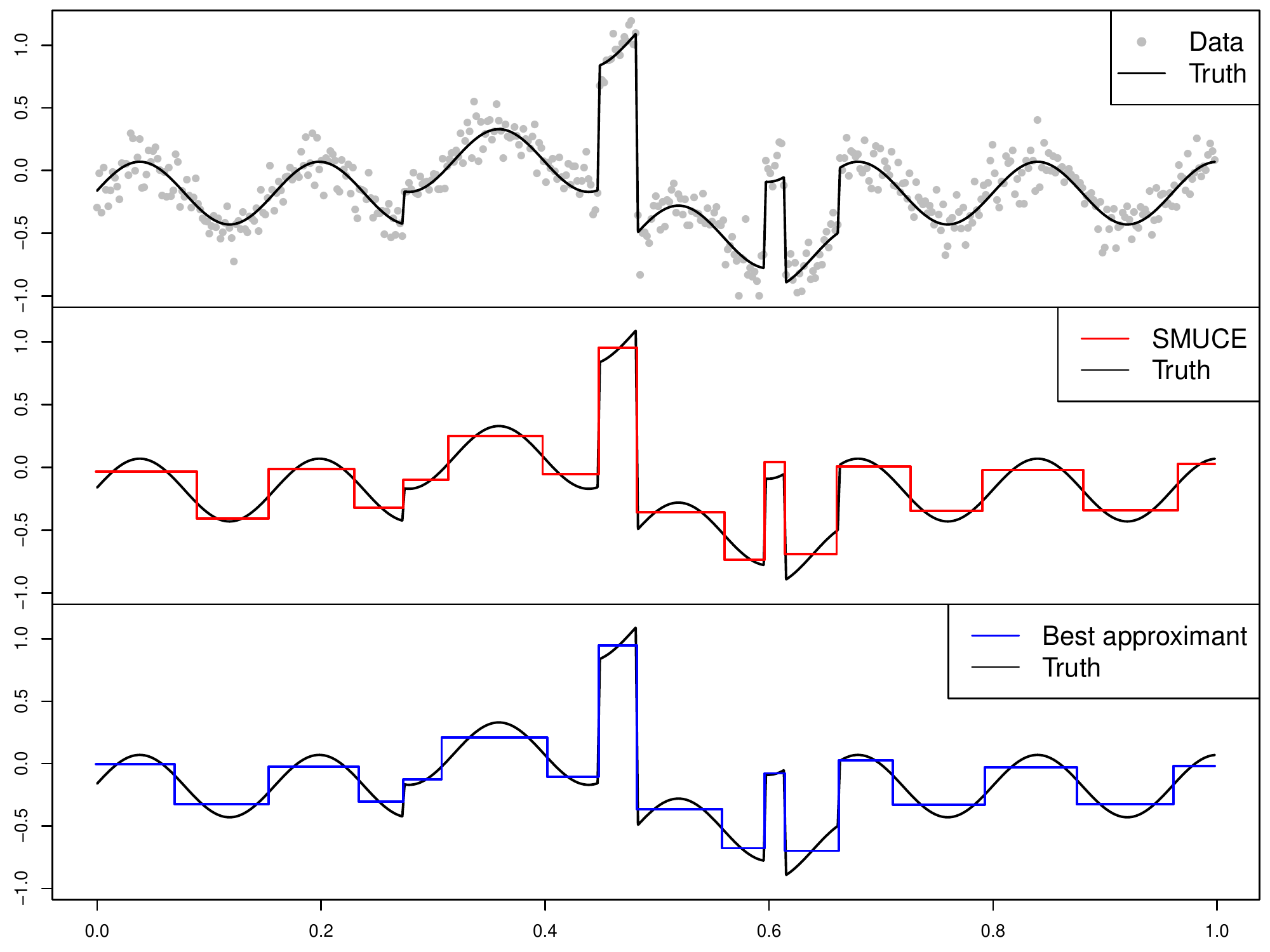}
\caption{{Performance of a particular multiscale change-point segmentation method $\hat f_n$ \citep[SMUCE,][]{FriMunSie14} as oracle approximants for the signal in~\cite{OlsVenLucWig04}, \cite{ZhaSie07}. The bottom panel shows the best approximant $f_{\hat K_n}$, defined in~\eqref{eq:bestApp}, of the truth with up to $\hat K_n$ jumps. Here $\text{SNR} = 3$ and $\norm{f - \hat f_n}_{L^2} = 1.3 \norm{f - f^\app_{\hat K_n}}_{L^2}$.}
\label{oapp_fig}} 
\end{figure}

\begin{remark}
Note that $\norm{f - f^\app_{\hat K_n}}_{L^2} \le \norm{f - \hat f_n}_{L^2}$, and that Proposition~\ref{pp:bestApp}  implies 
\[
\lim_{n \to \infty} \Prob[g]{\sup_{f \in \mathcal{A}^\gamma_{2,L}}\frac{\norm{f - f^\app_{\hat K_n}}_{L^2}}{\norm{f - \hat f_n}_{L^2}} \ge C} = 1.
\]
This indicates that $\hat f_n$ performs {almost (up to a constant) as} well as the best approximants $f^\app_{\hat K_n}$ of $f$ over all step functions with up to $\hat K_n$ jumps, see Figure~\ref{oapp_fig} for a visual illustration.
\end{remark}

\section{Simulation study}\label{s:numerics}

Note that in the definition of multiscale change-point segmentation methods, we consider only the local constraints on the intervals where  candidate functions are constant. This ensures the structure of the corresponding optimization problem~\eqref{eq:mr_segment} to be a directed acyclic graph, which makes \emph{dynamic programming} algorithms~\citep[cf.][]{Bel57} 
applicable to such a problem~{\cite[see also][]{FriKemLieWin08}.} Moreover, the computation can be substantially accelerated by incorporating \emph{pruning} ideas as recently developed in~\cite{KillFeaEck12}, \cite{FriMunSie14} and \cite{LMS16}.  
As a consequence, the {computational} complexity of multiscale change-point segmentation methods can be even \emph{linear} in terms of the number of observations, in case that there are many change-points, see~\cite{FriMunSie14} and~\cite{LMS16} for further details. 

We now investigate the {finite sample} performance of multiscale change-point segmentation methods from {the previously discussed} perspectives. For brevity, we only consider a particular multiscale change-point segmentation method, SMUCE~\citep{FriMunSie14},  and stress that the results are similar for other multiscale change-point segmentation methods {of type~\eqref{eq:mr_segment}} (which are not shown here), see e.g.~\cite{LMS16} for {an extensive} simulation study. For SMUCE, we use the implementation of an efficient pruned dynamic program from the CRAN R-package ``stepR'', select the system of all intervals with dyadic lengths for the multiscale constraint, and choose $\thd(\beta)$ as the threshold, which is simulated by 10,000 Monte-Carlo simulations. In what follows, the noise is assumed to be Gaussian with a known noise level $\sigma$, and SNR denotes the signal-to-noise ratio {$\norm{f}_{L^2}/\sigma$}.

\subsection{Stability}\label{ss:Stability}

We first examine the stability of multiscale change-point segmentation methods with respect to the significance level $\beta$, {i.e.~to the threshold $\thd$}. The test signal $f$~\citep[adopted from][]{OlsVenLucWig04,ZhaSie07} has 6 change points at 138, 225, 242, 299, 308, 332, and its values on each segment are -0.18, 0.08, 1.07, -0.53, 0.16, -0.69, -0.16, respectively.  Figure~\ref{sz0_alpha} presents the behavior of SMUCE  with threshold $\thd = \thd(\beta)$ for different choices of significance level $\beta$. In fact, for the shown data, SMUCE detects the correct number of change-points, and recovers the location and the height of each segment in high accuracy, for the whole range of $0.06 \le \beta \le 0.94$ {(i.e.~$ 0.47 \sqrt{\log n} \ge \thd \ge -0.04\sqrt{\log n}$).} {Only} for smaller $\beta$ ($ < 0.06, {\text{ i.e.~}\thd > 0.47\sqrt{\log n}}$), SMUCE tends to underestimate the number of change-points  (see the second panel of Figure~\ref{sz0_alpha} for example, where the missing change-point is marked by a vertical line), while, for larger $\beta$ ($ > 0.94, {\text{ i.e.~}\thd < -0.04\sqrt{\log n}}$), it is inclined to recover false change points (as shown in the last panel of Figure~\ref{sz0_alpha}). Note that in either case the estimated locations and heights of the remaining segments (away from the missing/spurious jumps) are fairly accurate.  This reveals that SMUCE is remarkably stable with respect to the choice of $\beta$ (or $\thd$), {in accordance with the assumptions~\eqref{eq:defQ} and~\eqref{eq:refineQ} of Theorem~\ref{th:step}, Remark~\ref{rem:adapt} and Proposition~\ref{pp:feature} (i).}  

\begin{figure}[!h]
\centering
\includegraphics[width=0.9\textwidth,clip]{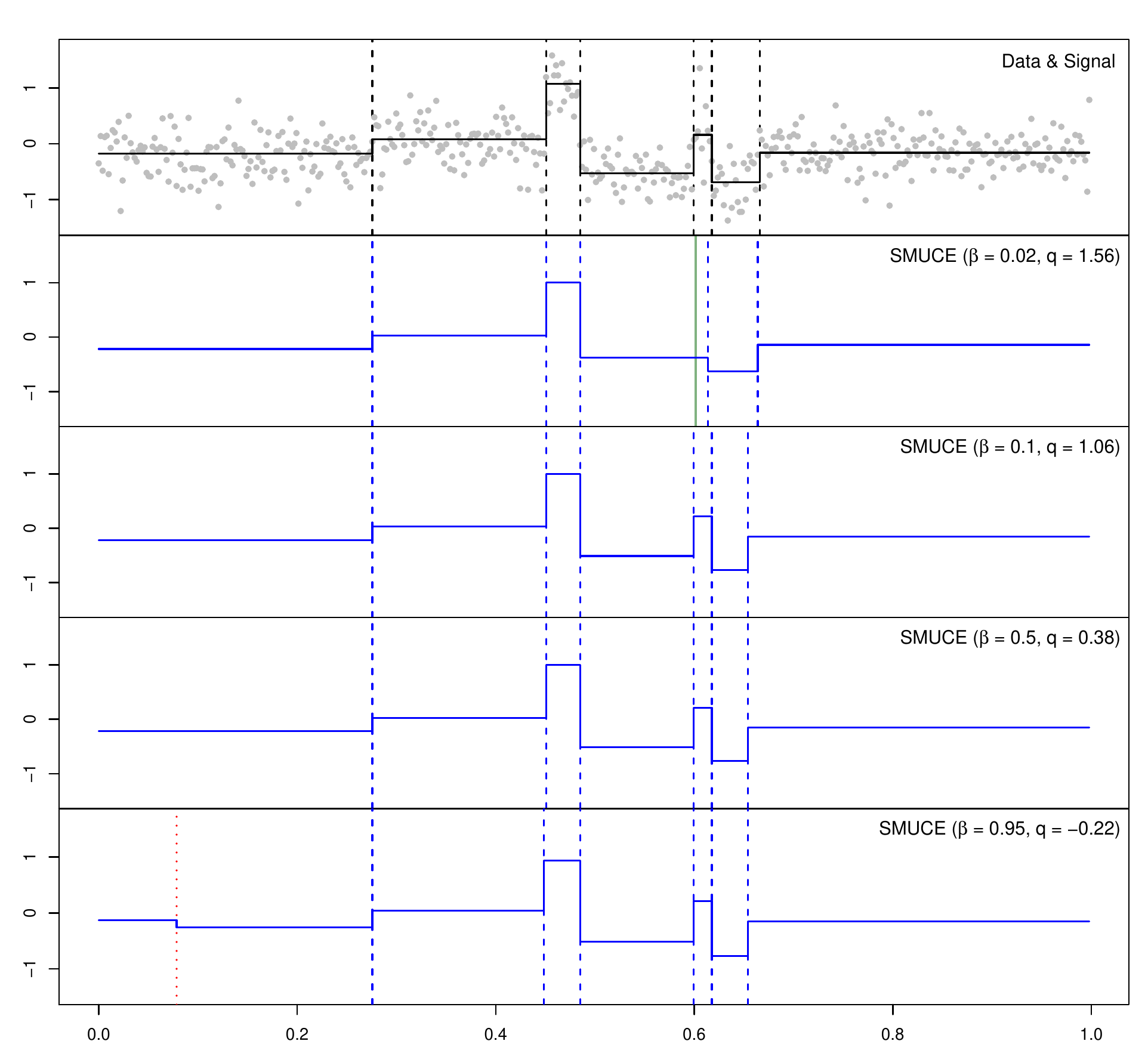}
\caption{Estimation of the step signal in~\cite{OlsVenLucWig04} and \cite{ZhaSie07} by SMUCE with $\thd = \thd(\beta)$ for different $\beta$ (sample size $n = 497$, and $\text{SNR} = 1$).}
\label{sz0_alpha}
\end{figure}

\begin{figure}[!t]
\centering
\includegraphics[width=0.9\textwidth,clip]{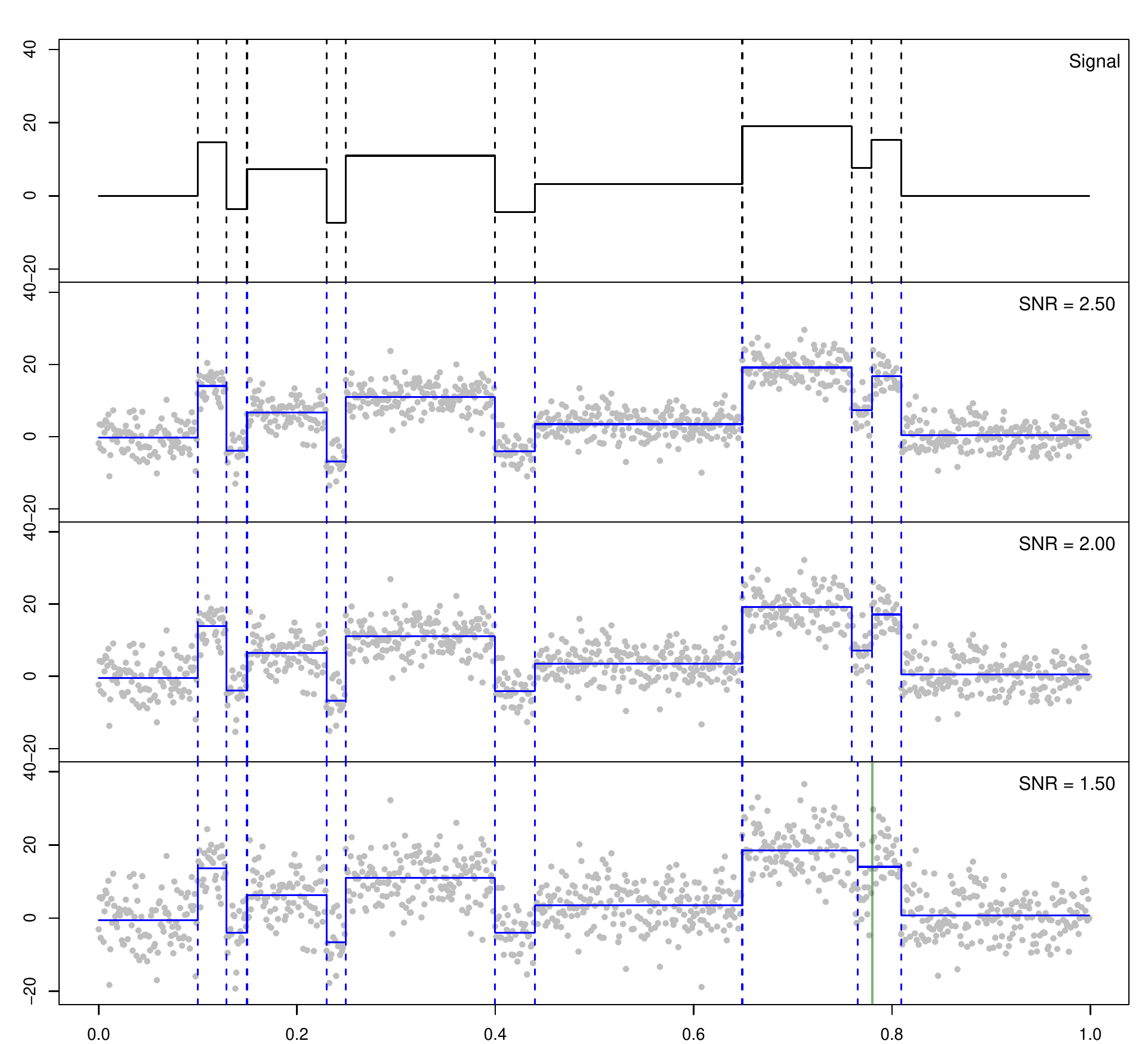}
\caption{Blocks signal: SMUCE for various noise levels (sample size $n = \text{1,023}$).}
\label{blocks_noise}
\end{figure}

\subsection{Different noise {levels}} 

We next investigate the impact of the noise level (or equivalently SNR) on multiscale change-point segmentation methods.  {We consider the recovery of the Blocks signal~\citep{DonJoh94} for different noise levels. } The {results for} SMUCE {at} significance level $\beta = 0.1$ {are} summarized in Figures~\ref{blocks_noise}.  {It shows that SMUCE recovers the signal  well for low and medium noise levels, while misses one or two {small scale} features {for small SNR.} }

\subsection{Robustness}

To study the robustness of multiscale change-point segmentation methods in case of model misspecification, we introduce a local trend component as in~\cite{OlsVenLucWig04} and~\cite{ZhaSie07} to the test signal $f$ in Section~\ref{ss:Stability}, which leads to the model 
\begin{equation}\label{eq:sz_signal}
y_{i}^{n} = \left(\bar f_i^n +0.25b\sin(a\pi i)\right)+\xi_{i}^n,\qquad i = 0, \ldots, n-1.
\end{equation}

Weak background waves: We simulate data {for} $a=0.025$ and $b=0.3$, and apply SMUCE again with various choices of $\beta$, see Figure~\ref{sz2_alpha}. In accordance with the previous simulations and Proposition~\ref{pp:feature} (ii), SMUCE captures all relevant features of the signal {again} for a wide range of $\beta$ ($0.08 \leq \beta \leq 0.29$).

Strong background waves: When the parameter $b$ becomes larger, i.e., the fluctuation is {stronger}, SMUCE  captures the fluctuation by inducing additional change-points according to Theorems~\ref{th:approx},~\ref{th:oSeg} and Proposition~\ref{pp:bestApp}. Figure~\ref{sz_trendnoise} illustrates the performance of SMUCE for the signal in~\eqref{eq:sz_signal} with $b=1.0$ and $b=1.2$ under different noise levels. {With high probability (see Section~\ref{s:feature}) it recovers the pieces of increases and decreases and hence the relevant modes and troughs. }

\begin{figure}[!t]
\centering
\includegraphics[width=0.9\textwidth,clip]{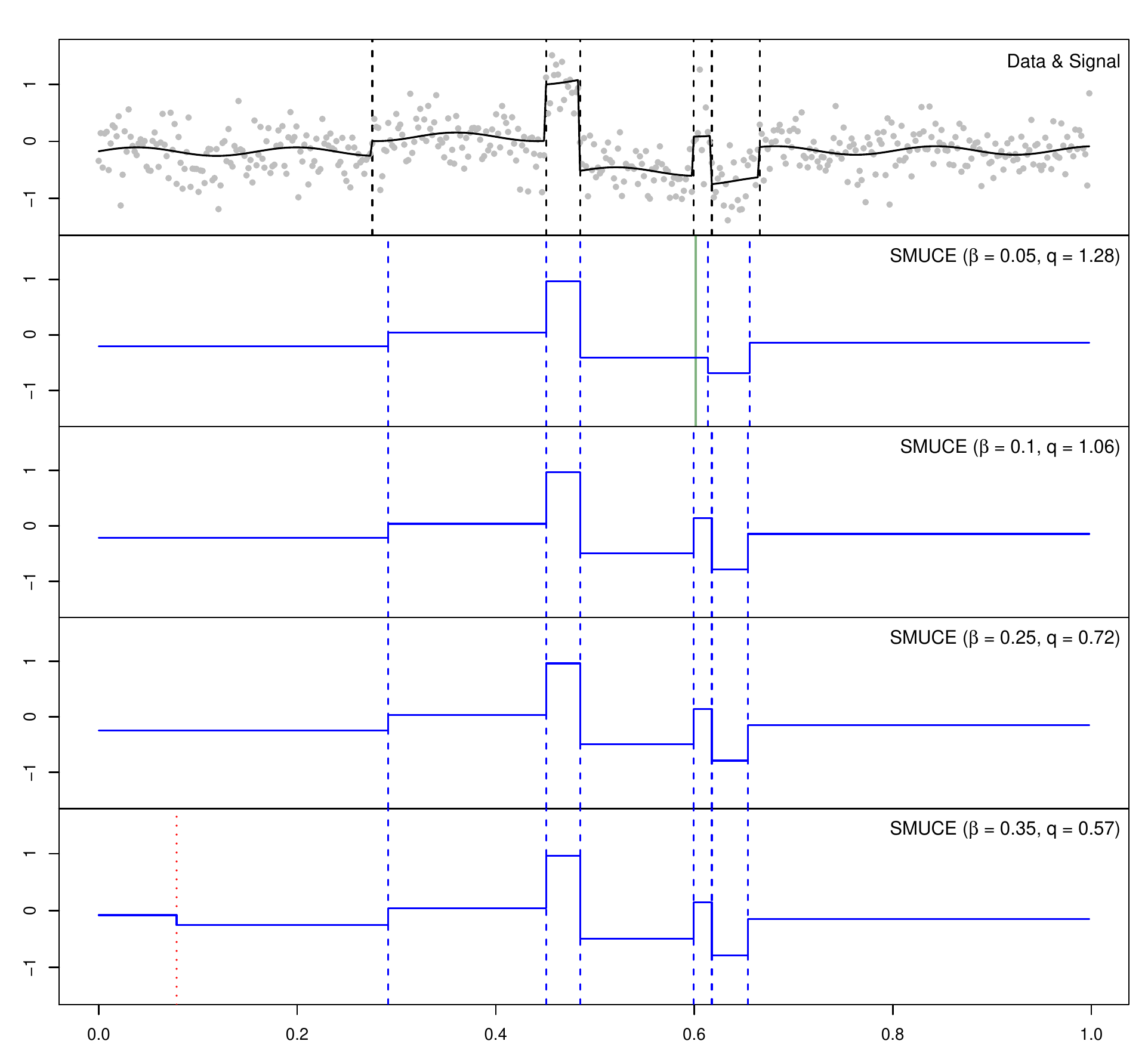}
\caption{Estimation of the signal in~\eqref{eq:sz_signal} {($a=0.025, b=0.3$)} by SMUCE with $\thd = \thd(\beta)$ for different $\beta$ (sample size $n = 497$, and $\text{SNR} = 1$).}
\label{sz2_alpha}
\end{figure}

\begin{figure}[!t]
\centering
\includegraphics[width=0.9\textwidth,clip]{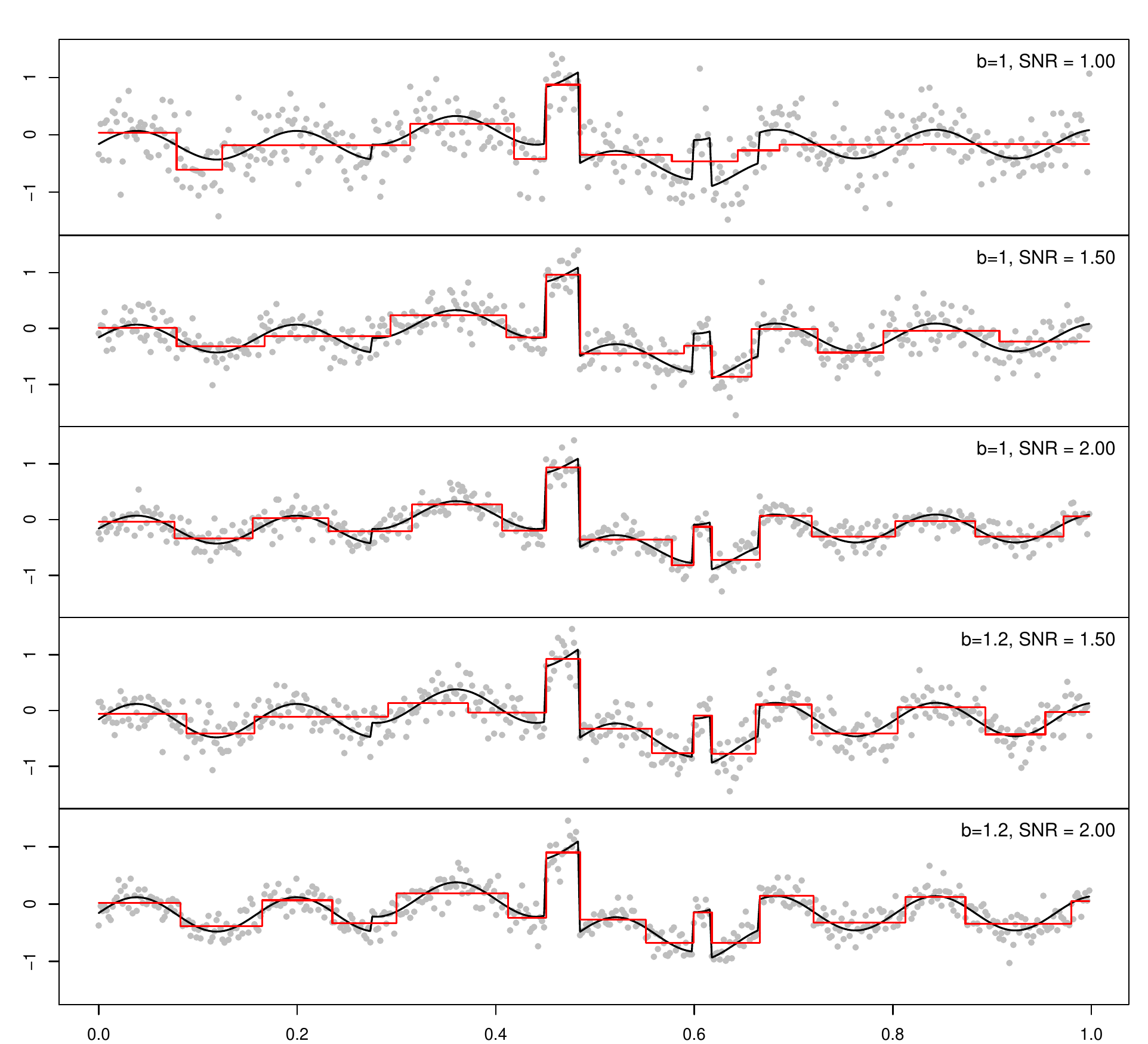}
\caption{Estimation of the signal in~\eqref{eq:sz_signal} with $a = 0.025$ and $b = 1$ or $1.2$ by SMUCE for various noise levels (sample size $n = 497$). } \label{sz_trendnoise}
\end{figure}

\subsection{Empirical convergence rates}

 Lastly, we empirically explore {how well the finite sample risk is approximated by our asymptotic approximations.} The test signals are Blocks and Heavisine~\citep{DonJoh94}. In Figure~\ref{conver1}, we {display} the average of $L^2$-loss of SMUCE with significance level $\beta = 0.1$ over 20 repetitions for a range of sample sizes from 1,023 to 10,230. From Figure~\ref{conver1} we draw that the empirical convergence rates are quite close to the minimax optimal rates (indicated by slopes of the red straight lines), which confirms our theoretical findings in Theorems~\ref{th:step} and~\ref{th:approx}.

\begin{figure}[!t]
\centering
\includegraphics[width=0.9\textwidth,clip]{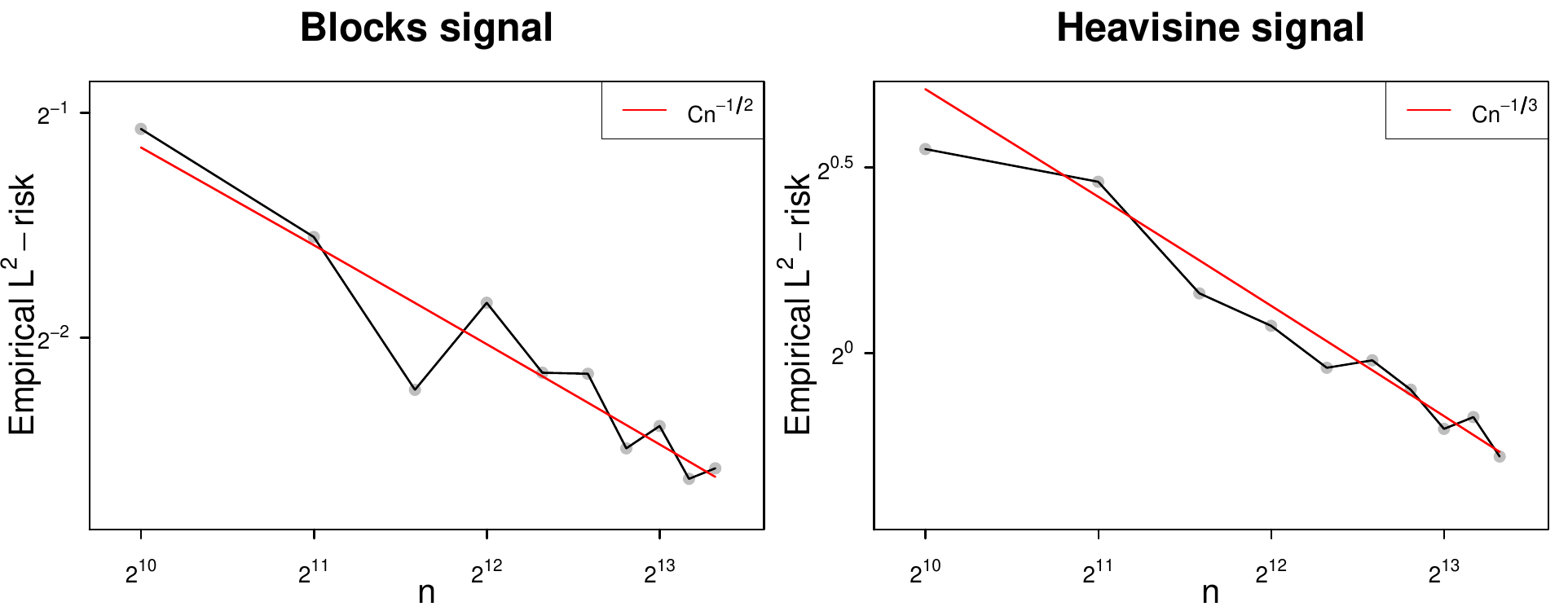}
\caption{Convergence rates of SMUCE for Blocks and Heavisine signals ($\text{SNR} = 2.5$).}
\label{conver1}
\end{figure}

\section{Conclusion}\label{s:discuss}
In this paper we focus on the convergence analysis for multiscale change-point segmentation methods, a general family of change-point estimators based on the combination of variational estimation and multiple testing over different scales, in a nonparametric regression setting with special emphasis on step functions while allowing for various distortions. We found that the estimation difficulty for a step function is mainly determined by its number of jumps, and shown that multiscale change-point segmentation methods attain the nearly optimal convergence rates for step functions with asymptotically bounded or even increasing number of jumps. As a robustness study, we also examined the convergence behavior of these methods for more general functions, which are viewed as distorted jump functions. Such distortion is precisely characterized by certain approximation spaces. In particular, we have derived  nearly optimal convergence rates for multiscale change-point segmentation methods in case that the regression function is either a (piecewise) H\"older function or a bounded variation function.  {Remarkably}, these methods automatically adapt to the unknown smoothness for all aforementioned function classes, as the only tuning parameter can be selected in a universal way.  The convergence rates also provide statistical justification with respect to the detection of features, such as change-points and modes (or troughs).  In addition, the multiscale change-point segmentation methods $\hat f_n$ are shown perform nearly as well as the oracle piecewise constant segmentation estimators, and the best piecewise constant {(oracle)} approximants of the truth with less or the same number of jumps as $\hat f_n$.

The multiscale change-point segmentation methods, however, cannot attain faster rates for functions of stronger smoothness than above, which is indeed a common saturation shared by all piecewise constant segment estimators.  This can be improved by considering piecewise polynomial estimators~\citep[see e.g.][]{Spo98}, but the proper combination with multiscale methodology needs further investigation~\citep[see the rejoinder by][for a first attempt]{FriMunSie14}. Alternatively, certain smoothness penalty can be selected instead of the number of jumps in the formulation of multiscale change-point segmentation, see e.g.~\cite{GrLiMu15}, where the nearly optimal rates are shown for higher order Sobolev/Besov classes. In addition, extension of our results to models with general errors beyond sub-Gaussian, such as {heavy tailed distributions}, {and stationery Gaussian processes~\citep[see e.g.][]{ChSch15},} would be interesting for future research.

\appendix

\setstretch{1.0}

\section{Proofs} 

\subsection{Proof of Theorem~\ref{th:step}}\label{app:step}
We first consider Part (i), and structure the proof into three steps. 
\newline 
\noindent
a) \emph{Good noise case}. Assume that the truth $f$ lies in the multiscale constraint, 
$$T_{\mathcal{I}}(y^n; f) \le a \sqrt{\log n}.$$
In particular, $T_{\mathcal{I}}(y^n; f) \le a \sqrt{\log n}$, so $\#J(\hat{f}_n) \le \# J(f) \le k_n$. Let intervals $\{I_i\}_{i = 0}^m$ be the partition of $[0,1)$ by $J(\hat{f}_n)\cup J(f)$ with $m \le 2 k_n$. Then
\[
\norm{\hat{f}_n - f}_{L^p}^p = \sum_{i = 0}^m \abs{\hat\theta_i - \theta_i}^p \abs{I_i}\qquad \text{ with } \hat{f}_n|_{I_i} \equiv \hat\theta_i \text{ and } {f}|_{I_i} \equiv \theta_i.
\]
If $\abs{I_i} > c/n$, then by $c$-normality of $\mathcal{I}$, there is $\tilde{I}_i \in \mathcal{I}$ such that $\tilde{I}_i \subseteq I_i$ and $\abs{\tilde{I}_i} \ge \abs{I_i}/c$. It follows that
\[
{\abs[b]{\tilde{I}_i}}^{1/2}\abs[g]{\theta - \frac{1}{n\abs{\tilde{I}_i}}\sum_{j/n \in \tilde{I}_i}y_j^n} \le (a+\delta)\sqrt{\frac{\log n}{n}}\qquad\text{ for } \theta = \theta_i \text{ or }\hat\theta_i,
\]
which, together with $\abs{\tilde{I}_i} \ge \abs{I_i}/c$, implies
\[
\abs{I_i}^{1/2}\abs{\hat\theta_i -\theta_i} \le 2(a+\delta)\sqrt{\frac{c\log n}{n}}.
\]
If $\abs{I_i} \le c/n$, then we have for some $i_0$
\[
\abs{\hat\theta_i-\theta_i} \le \abs{\hat\theta_i - y_{i_0}^n} + \abs{y_{i_0}^n -\bar f_{i_0}^{n}} + 2\norm{f}_{L^\infty} \le 2(a+\delta)\sqrt{{\log n}} + 2L. 
\]
Thus, by combining these two situations, we obtain that
\begin{multline*}
\norm{\hat{f}_n - f}_{L^p}^p \le \sum_{i:\abs{I_i} > c/n}\abs{I_i}  \biggl(2(a+\delta)\sqrt{\frac{c\log n}{n\abs{I_i}}}\biggr)^p
+ \sum_{i:\abs{I_i} \le c/n}\frac{c}{n}\left(2(a+\delta)\sqrt{\log n}+2L\right)^p. 
\end{multline*}
Note that for $0 <  p < 2$, by the H\"{o}lder's inequality, 
\begin{align*}
\sum_{i:\abs{I_i} > c/n} \abs{I_i} \biggl(2(a+\delta)\sqrt{\frac{c\log n}{n\abs{I_i}}}\biggr)^p
\le &\Bigl(\sum_{i:\abs{I_i} > c/n} \abs{I_i}\Bigr)^{1-p/2}\biggl(\sum_{i:\abs{I_i} > c/n} 4(a+\delta)^2\frac{c\log n}{n}\biggr)^{p/2} \\
 \le & \left(4(2k_n+1)(a+\delta)^2\frac{c\log n}{n}\right)^{p/2},
\end{align*}
and for $2 \le p < \infty$,
\begin{align*}
\sum_{i:\abs{I_i} > c/n} \abs{I_i} \left(2(a+\delta)\sqrt{\frac{c\log n}{n\abs{I_i}}}\right)^p
\le & \sum_{i:\abs{I_i} > c/n} \left(2(a+\delta)\sqrt{\frac{c\log n}{n}}\right)^p\left(\frac{c}{n}\right)^{1-p/2}  \\
\le & \frac{(2k_n+1)c}{n}\bigl(4(a+\delta)^2{\log n}\bigr)^{p/2}. 
\end{align*}
Since $k_n = o(n)$, we have as $n \to \infty$,
\begin{equation}\label{eq:goodCaseStep}
\norm{\hat{f}_n-f}_{L^p}^r \le 2^{r/p}\Bigl(\frac{(2k_n+1)c}{n}\Bigr)^{\min\{r/2,r/p\}}\bigl(4(a+\delta)^2{\log n}\bigr)^{r/2}\bigl(1+o(1)\bigr). 
\end{equation}
\newline 
\noindent
b) \emph{Almost sure convergence}.
For each $I\in\mathcal{I}$, note that $(n\abs{I})^{-1/2}\sum_{i/n \in I} \xi_i^n$ is again sub-Gaussian with scale parameter $\sigma$, so $\Prob[b]{(n\abs{I})^{-1/2}\abs{\sum_{i/n \in I} \xi_i^n} > x} \le 2 \exp(-x^2/2\sigma^2)$ for any $x > 0$. 
Then, by Boole's inequality, it holds that 
\begin{equation} \label{eq:tailMRstat}
\begin{aligned} 
\Prob[a]{T_{\mathcal{I}}(y^n; f) > a\sqrt{\log n}} 
\le & \Prob[a]{\sup_{I \in \mathcal{I}}\frac{1}{\sqrt{n\abs{I}}}\abs[B]{\sum_{i/n \in I} \xi_i^n} > (a-\delta)\sqrt{\log n}} \\
\le & 2n^{-\frac{(a-\delta)^2}{2\sigma^2}+2}   \le {2}{n}^{-r} \to 0\qquad \text{ as } n \to \infty. 
\end{aligned}
\end{equation}
This and~\eqref{eq:goodCaseStep} imply the almost sure convergence assertion for $\thd = a\sqrt{\log n}$. 
\newline 
\noindent
c) \emph{Convergence in expectation}. It follows from~\eqref{eq:goodCaseStep} that
\begin{align*}
\E[a]{\norm{\hat{f}_n - f}_{L^p}^r} 
= & \E[a]{\norm{\hat{f}_n - f}_{L^p}^r; T_{\mathcal{I}}(y^n; f) \le a\sqrt{\log n}}\\
& {}\qquad\qquad + \E[a]{\norm{\hat{f}_n-f}_{L^p}^r; T_{\mathcal{I}}(y^n; f) > a\sqrt{\log n}} \\
\le & 2^{r/p}\Bigl(\frac{(2k_n+1)c}{n}\Bigr)^{\min\{r/2,r/p\}}\bigl(4(a+\delta)^2{\log n}\bigr)^{r/2}\bigl(1+o(1)\bigr) \\
& {}\qquad\qquad + \E[a]{\norm{\hat{f}_n-f}_{L^p}^r; T_{\mathcal{I}}(y^n; f) > a\sqrt{\log n}}. 
\end{align*}
We next show the second term above asymptotically vanishes faster. 
\begin{align}
&\E[a]{\norm{\hat{f}_n-f}_{L^p}^r; T_{\mathcal{I}}(y^n; f) > a\sqrt{\log n}}\nonumber \\
 = &\int_{0}^{2{n}^{p/2}}\Prob[a]{\norm{\hat{f}_n -f}_{L^p}^p \ge u; T_{\mathcal{I}}(y^n; f) > a\sqrt{\log n}}\frac{r}{p}u^{r/p -1}du\nonumber \\
& {}\qquad\qquad + \int_{2{n}^{p/2}}^{\infty}\Prob[a]{\norm{\hat{f}_n -f}_{L^p}^p \ge u; T_{\mathcal{I}}(y^n; f) > a\sqrt{\log n}} \frac{r}{p}u^{r/p -1} du\nonumber \\
\le & 2^{r/p}{n}^{r/2}\Prob[a]{T_{\mathcal{I}}(y^n; f) > a\sqrt{\log n}} + \int_{2{n}^{p/2}}^{\infty}\Prob[a]{\norm{\hat{f}_n -f}_{L^p}^p \ge u}  \frac{r}{p}u^{r/p -1} du\nonumber  \\
\le & 2^{r/p + 1} n^{-r/2} + \int_{2{n}^{p/2}}^{\infty}\Prob[a]{\norm{\hat{f}_n -f}_{L^p}^p \ge u}  \frac{r}{p}u^{r/p -1} du,\label{eq:tail_term}
\end{align} 
where the last inequality is due to~\eqref{eq:tailMRstat}. Introduce functions  $g = \sum_{i=0}^{n-1}y^n_i\Ind_{[i/n,(i+1)/n)}$ and $h = \sum_{i=0}^{n-1}f(i/n)\Ind_{[i/n,(i+1)/n)}$. Then, with notation $\xi^n \coloneqq \{\xi_i^n\}_{i = 0}^{n-1}$, $(x)_+ \coloneqq \max\{x, 0\}$ and $s\coloneqq (2r-p)_+$,  it holds that
\begin{align*}
\norm{\hat{f}_n -f}_{L^p}^p \le & 3^{(p-1)_+} \left( \norm{\hat{f}_n- g}_{L^p}^p + \norm{g - h}_{L^p}^p + \norm{h - f}_{L^p}^p \right)\\
\le & 3^{(p-1)_+} \left((a + \delta)^p{(\log n)}^{p/2} + n^{-1}\norm{\xi^n}_{\ell^p}^p + (2L)^p \right)\\
\le & 3^{(p-1)_+} \left((a + \delta)^p{(\log n)}^{p/2} + n^{-p/(p+s)}\norm{\xi^n}_{\ell^{p+s}}^{p} + (2L)^p \right).
\end{align*}
Thus, for large enough $n$, i.e.~if $n^{p/2} \ge 3^{(p-1)_+} \bigl((a + \delta)^p{(\log n)}^{p/2} + (2L)^p \bigr)$, 
\begin{align*}
& \int_{2{n}^{p/2}}^{\infty}\Prob[a]{\norm{\hat{f}_n -f}_{L^p}^p \ge u}  \frac{r}{p}u^{r/p -1} du \\
\le & \int_{2n^{p/2}}^{\infty}\Prob[a]{3^{(p-1)_+} \left((a + \delta)^p{(\log n)}^{p/2} + n^{-p/(p+s)}\norm{\xi^n}_{\ell^{p+s}}^{p} + (2L)^p \right) \ge u}  \frac{r}{p}u^{r/p -1} du  \\
\le & \int_{n^{p/2}}^{\infty}\Prob[a]{3^{(1+s/p)(p-1)_+}\frac{1}{n}\sum_{i=0}^{n-1}\abs{\xi_i^n}^{p+s} \ge u^{1+s/p}}\frac{r}{p}2^{r/p}u^{r/p -1} du \\
\le & 2^{r/p}3^{(1+s/p)(p-1)_+}\E[a]{\frac{1}{n}\sum_{i=0}^{n-1}\abs{\xi_i^n}^{p+s}}\int_{n^{p/2}}^{\infty}\frac{r}{p}u^{-(s-r)/p-2}du = \mathcal{O}(n^{-r/2}), 
\end{align*}
where the last inequality holds by the fact $s \ge 2r-p$ and $$
\E{\abs{\xi_i^n}^{p+s}} \le (p+s)2^{(p+s)/2}\sigma^{p+s}\Gamma\bigl(\frac{p+s}{2}\bigr) = \mathcal{O}(1) \qquad \text{for each } i.
$$
Thus, by~\eqref{eq:tail_term} it holds that
\begin{align*}
\E[a]{\norm{\hat{f}_n-f}_{L^p}^r; T_{\mathcal{I}}(y^n; f) > a\sqrt{\log n}} = & \mathcal{O}(n^{-r/2}) \\
= & o\Bigl(\left(n^{-1}(2k_n+1)\right)^{\min\{r/p,r/2\}}(\log n)^{r/2}\Bigr).
\end{align*}
This concludes the proof of Part (i). 

Moreover, we stress that for the choice of threshold $\thd = \thd(\beta)$ the assertions of Part~(i) still hold, which follow readily from the proof above, by noting the facts that $\thd(\beta) \le a\sqrt{\log n} $ for some constant $a$, due to~\eqref{eq:tailMRstat}, and that $\Prob[b]{T_{\mathcal{I}}(y^n; f) >\thd(\beta)}  = \mathcal{O}(n^{-r})$ by the choice of $\beta = \mathcal{O}(n^{-r})$. 

Finally, we consider Part (ii). The lower bound can be proven similarly as~\citet[Theorem 3.4]{LMS16}, by means of standard arguments based on testing many hypotheses \citep[pioneered by][]{IbHa77,Has78}. More precisely, we consider two collections of hypotheses 
\[
\set[g]{\sum_{i = 1}^{2k_n+2}\frac{(-1)^i\tilde z_0}{2}\Ind_{[\frac{i-1}{2k_n+2}+c_{i-1},\frac{i}{2k_n+2}+c_i)}}{c_i = \pm\frac{\sigma_0^2\log 2}{32n\tilde z_0^2}, c_0 = c_{2k_n+2} = 0}\subseteq \mathcal{S}_L(k_n)
\]
with $\tilde z_0 \coloneqq \min\{z_0, L\}$, and 
\[
\set[g]{\sum_{i = 1}^{k_n+1}\frac{(-1)^iL+c_i}{2}\Ind_{[\frac{i-1}{k_n+1},\frac{i}{k_n+1})}}{c_i = \pm\frac{\sigma_0}{4}\sqrt{\frac{k_n\log 2}{2 n}}}\subseteq \mathcal{S}_L(k_n). 
\]
Elementary calculation together with Fano's lemma~\citep[cf.][Corollary~2.6]{Tsy09} concludes the proof.

Parts (i) and (ii) imply that $\hat f_n$ is minimax optimal over $\mathcal{S}_{L}(k_n)$ up to a log-factor. Now the adaptation property follows by noting further that the choice of  the only tuning parameter $\thd$ in~\eqref{eq:defQ} is universal, i.e.~completely independent of the (unknown) true regression function. \hfill\qedsymbol

\subsection{Proof of Theorem~\ref{th:approx}}\label{app:approx}
The idea behind is that we first approximate the truth $f$ by a step function $f_{k_n}$ with $\mathcal{O}(k_n)$ jumps, and then treat $f_{k_n}$ as the underlying  ``true''  signal in model~\eqref{eq:model} (with additional approximation error). In this way, it allows us to employ similar techniques as in the proof of Theorem~\ref{th:step}. To be rigorous, we give a detailed proof as follows. 

\noindent
Since $\A_{q,L}^\gamma \subseteq \A_{\infty, L}^\gamma$, it is sufficient to consider $q < \infty$. 

\noindent
a) \emph{Good noise case}. Assume for the moment that the observations $y^n =\{y_i^n\}_{i=0}^{n-1}$ from model~\eqref{eq:model} are close to the truth $f$ in the sense that the event
\begin{equation}\label{eq:goodNoise}
 \mathcal{G}_n \coloneqq \left\{y^n\, :\, \sup_{I \in \mathcal{I}} \frac{1}{\sqrt{n\abs{I}}} \abs[B]{\sum_{i/n \in I}\bigl(y_i^n - \bar f_i^n\bigr)} -s_I\le a_0 \sqrt{\log n}\right\}
\end{equation}
holds  with $a_0 = \delta + \sigma\sqrt{2r+4}.$ Now let 
$$
k_n \coloneqq \Bigl\lceil\Bigl(\frac{4L}{a - a_0}\Bigr)^{2/(2\gamma+1)}\Bigl(\frac{n}{\log n}\Bigr)^{1/(2\gamma+1)} \Bigr\rceil.
$$
Since $f \in \A_{q,L}^{\gamma}$, for every $n$ there exists a step function $\tilde f_{k_n}\in \mathcal{S}$ with $\#J(\tilde f_{k_n}) \le k_n$ such that $\norm{f - \tilde f_{k_n}}_{L^q}  \le Lk_n^{-\gamma}$, by means of compactness argument. Based on the continuity of $\int_{[0,x)}f(t)dt$, one can find $\tau_0 \equiv 0 < \tau_1 < \cdots < \tau_{k_n} \equiv 1$ satisfying $\int_{[\tau_{i-1}, \tau_i)}\abs{f(t) - \tilde f_{k_n}(t)}^2 dt = \norm{f - \tilde f_{k_n}}^2_{L^2} / k_n$ for each $i$. By including such $\tau_i$'s as change-points, one can construct another step function $\breve{f}_{k_n}$ with $\#J(\breve{f}_{k_n}) \le 2k_n$, $\norm{f - \breve{f}_{k_n}}_{L^q} \le 2Lk_n^{-\gamma}$, and $\norm{(f - \breve{f}_{k_n})\Ind_{{I}}}_{L^2} \le 2Lk_n^{-\gamma-1/2}$ for every segment ${I}$ of $\breve{f}_{k_n}$. Moving each change-point of $\breve{f}_{k_n}$ to the closest point in $\{0, 1/n, \ldots, {(n-1)/n}\}$ but leaving the height of segments unchanged, one obtains a step function $f_{k_n}$ such that $\#J({f}_{k_n}) \le 2k_n$ and $\norm{f - {f}_{k_n}}_{L^q} \le 2Lk_n^{-\gamma} + 2L(k_n/n)^{1/q}$. Since $q \ge 2$, it holds that $\norm{(f - {f}_{k_n})\Ind_{{I}}}_{L^2} \le 2Lk_n^{-\gamma-1/2} + 2L n^{-1/2}$ for every segment ${I}$ of ${f}_{k_n}$. Then for sufficiently large $n$
\begin{align*}
T_{\mathcal{I}}(y^n; f_{k_n}) \le &  \sup_{\substack{I \in \mathcal{I} \\ f_{k_n} \equiv c_I \text{ on } I}} \frac{1}{\sqrt{n\abs{I}}} \abs[b]{\sum_{i/n \in I}( \bar f^n_i - c_I)} + \sup_{I \in \mathcal{I}} \frac{1}{\sqrt{n\abs{I}}} \abs[b]{\sum_{i/n \in I} (y_i^n - \bar f^n_i)} -s_I \\
\le &  \sup_{\substack{I \in \mathcal{I} \\ f_{k_n} \equiv c_I \text{ on } I}} \sqrt{\frac{n}{\abs{I}}} \int_I\abs{f(t) - f_{k_n}(t)}dt + a_0 \sqrt{\log n} \\
\le & \sup_{\substack{I \in \mathcal{I} \\ f_{k_n} \equiv c_I \text{ on } I}} \sqrt{n}\norm{(f - f_{k_n})\Ind_I}_{L^2} + a_0 \sqrt{\log n} \\
\le & 2n^{1/2}k_n^{-\gamma-1/2}L + 2L + a_0 \sqrt{\log n} \le a\sqrt{\log n}.  
\end{align*}
That is, $f_{k_n}$ lies in the constraint of~\eqref{eq:mr_segment}. Thus, by definition, $\# J(\hat{f}_n) \le \#J(f_{k_n}) \le 2k_n$. Let intervals $\{I_i\}_{i = 0}^m$ be the partition of $[0,1)$ by $J(\hat{f}_n)\cup J(f_{k_n})$ with $m \le 4 k_n$. Then
\[
\norm{\hat{f}_n - f_{k_n}}_{L^p}^p = \sum_{i = 0}^m \abs{\hat{\theta}_i - \theta_i}^p \abs{I_i} \qquad \text{ with } \hat{f}_n|_{I_i} \equiv \hat\theta_i \text{ and } {f_{k_n}}|_{I_i} \equiv \theta_i. 
\]
If $\abs{I_i} > c/n$, there is $\tilde{I}_i \in \mathcal{I}$ such that $\tilde{I}_i \subseteq I_i$ and $\abs{\tilde{I}_i} \ge \abs{I_i}/c$. Then,
\[
{\abs[b]{\tilde{I}_i}}^{1/2}\abs[g]{\theta - \frac{1}{n\abs{\tilde{I}_i}}\sum_{j/n \in \tilde{I}_i}y_j^n} \le (a+\delta)\sqrt{\frac{\log n}{n}}\qquad\text{ for } \theta = \theta_i \text{ or }\hat\theta_i,
\]
which, together with $\abs{\tilde{I}_i} \ge \abs{I_i}/c$, implies
\[
\abs{I_i}^{1/2}\abs{\hat\theta_i -\theta_i} \le 2(a+\delta)\sqrt{\frac{c\log n}{n}}.
\]
If $\abs{I_i} \le c/n$, then we have for some $i_0$
\begin{align*}
 \abs{\hat\theta_i} \le \abs{\hat\theta_i - y_{i_0}^n} + \abs[b]{y_{i_0}^n -\bar f_{i_0}^{n}} + \norm{f}_{L^\infty} \le 2(a+\delta)\sqrt{\log n} + L
\text{ and }
\abs{\theta_i} \le \norm{f}_{L^{\infty}} \le L, 
\end{align*}
which lead to
\[
\abs{\hat\theta_i -\theta_i} \le \abs{\hat\theta_i}+\abs{\theta_i} \le 2(a+\delta)\sqrt{\frac{\log n}{n}} + 2L.
\]
Thus, by combining these two situations, we obtain that
\begin{multline*}
\norm{\hat{f}_n - f_{k_n}}_{L^p}^p \le \sum_{i:\abs{I_i} > c/n} \biggl(2(a+\delta)\sqrt{\frac{c\log n}{n\abs{I_i}}}\biggr)^p\abs{I_i} 
+ \sum_{i:\abs{I_i} \le c/n}\left(2(a+\delta)\sqrt{\log n}+2 L\right)^p\frac{c}{n}. 
\end{multline*}
Then, with a similar argument as for~\eqref{eq:goodCaseStep}, we obtain as $n\to\infty$ 
\[
\norm{\hat{f}_n-f_{k_n}}_{L^p}^p \le 2 \Bigl(4(a+\delta)^2{\log n}\Bigr)^{p/2}\Bigl(\frac{(4k_n+1)c}{n}\Bigr)^{\min\{1,p/2\}}\bigl(1+o(1)\bigr),
\]
which together with a triangular inequality leads to 
\begin{equation}\label{eq:goodCaseApprox}
\norm{\hat{f}_n-f}_{L^p}^r \le 2^{(2/p+1)r} \Bigl(4(a+\delta)^2{\log n}\Bigr)^{r/2}\Bigl(\frac{(4k_n+1)c}{n}\Bigr)^{\min\{r/p,r/2\}}\bigl(1+o(1)\bigr).
\end{equation}
\newline 
\noindent
b) \emph{Rates of convergence}. The rate of almost convergence is a consequence of~\eqref{eq:goodCaseApprox} and the fact that, due to~\eqref{eq:tailMRstat},
\begin{align*}
\limsup_{n\to\infty}\Prob[a]{\mathcal{G}_n^c} 
\le  \limsup_{n\to\infty}\Prob[a]{\sup_{I \in \mathcal{I}}\frac{1}{\sqrt{n\abs{I}}}\abs[B]{\sum_{i/n \in I} \xi_i^n} > (a_0-\delta)\sqrt{\log n}} = 0.
\end{align*}
Similar to the proof step (iii) of Theorem~\ref{th:step}, we drive from~\eqref{eq:goodCaseApprox} that, as $n \to \infty$, 
\begin{align*}
& \E[a]{\norm{\hat{f}_n - f}_{L^p}^r}\\
 = &\E[a]{\norm{\hat{f}_n - f}_{L^p}^r; \mathcal{G}_n} + \E[a]{\norm{\hat{f}_n - f}_{L^p}^r; \mathcal{G}_n^c}\\
\le &\E[a]{\norm{\hat{f}_n - f}_{L^p}^r; \mathcal{G}_n} + 2^{r/p}{n}^{r/2}\Prob[a]{\mathcal{G}_n^c} + \int_{2n^{p/2}}^{\infty} \Prob[a]{\norm{\hat{f}_n-f}_{L^p}^p\ge u}\frac{r}{p}u^{r/p -1} du \\
\le & \mathcal{O}\Bigl(({\log n})^{r/2}\bigl(n^{-1}{k_n}\bigr)^{\min\{r/p,r/2\}}\Bigr) + \mathcal{O}\bigl(n^{-r/2}\bigr)\\
 = & \mathcal{O}\Bigl(({\log n})^{r/2}\bigl(n^{-1}{k_n}\bigr)^{\min\{r/p,r/2\}}\Bigr),
\end{align*}
which shows the rate of convergence in expectation. 

We note, in addition, that for the choice of threshold $\thd = \thd(\beta)$, the proof follows in the same way as above, based on the facts that 
$\thd(\beta) \le a\sqrt{\log n} $ for some constant $a$, due to~\eqref{eq:tailMRstat}, and that $\Prob[a]{\mathcal{G}_n^c}  = \mathcal{O}(n^{-r})$ by the choice of $\beta = \mathcal{O}(n^{-r})$. 
\hfill\qedsymbol

\subsection{{Proof of Theorem~\ref{th:feature}}}\label{app:feature}
{
The proof relies on the following lemma. 
\begin{lemma}\label{lm:feature}
Under model~\eqref{eq:model} with the truth $f \in \D$, let $\hat{f}_n$ be a multiscale change-point segmentation method in Definition~\ref{def:mrseg} with constants $c$, $\delta$,  interval system $\mathcal{I}$, and universal threshold $\eta$ in~\eqref{eq:defQ} or~\eqref{eq:refineQ}. 
\begin{itemize}
\item[\emph{a)}] 
Let $\mathcal{I}_n$ be an arbitrary collection of (possibly random) intervals. If further $f \in \A^{\gamma}_{2,L}$ for some $\gamma, L > 0$, then  
$$
\lim_{n\to\infty}\Prob[a]{\max\set[b]{\abs{I}^{1/2}\abs{m_I(\hat f_n) - m_I(f)}}{I \in \mathcal{I}_n}  \le C\Bigl(\frac{\log n}{n}\Bigr)^{\gamma/(2\gamma + 1)}} = 1,
$$
where $C$ is a constant depending only on $\eta$, $c$ and $L$. 
\item[\emph{b)}]
If further $\mathcal{I}_n \subseteq \mathcal{I}$, and on each $I \in \mathcal{I}_n$ we have $\hat f_n$ is constant, then
$$
\Prob[B]{\abs{I}^{1/2}\abs{m_I(\hat f_n) - m_I(f)}\le\frac{2(\eta +s_I)}{n^{1/2}}\quad \text{ for all }I \in \mathcal{I}_n} \ge \Prob[a]{T_{\mathcal{I}}(\xi^n; 0) \le \eta},
$$
where the right hand side converges to $1$ as $n\to\infty$. 
\end{itemize}
\end{lemma}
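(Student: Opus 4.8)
The two parts are essentially independent, and I would prove them separately.

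\emph{Part a).} The plan is to reduce everything to the $L^2$-rate of Theorem~\ref{th:approx}. For \emph{any} interval $I$ and any $g \in L^2([0,1))$, Cauchy--Schwarz gives $\abs{I}^{1/2}\abs{m_I(g)} = \abs{I}^{-1/2}\abs[b]{\int_I g} \le \norm{g}_{L^2(I)} \le \norm{g}_{L^2}$. Applied to $g = \hat f_n - f$ this yields the \emph{deterministic}, \emph{uniform} bound $\max_{I\in\mathcal{I}_n}\abs{I}^{1/2}\abs{m_I(\hat f_n) - m_I(f)} \le \norm{\hat f_n - f}_{L^2}$, so that neither the (possibly random) choice of $\mathcal{I}_n$ nor its cardinality enters. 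Since $f\in\A^{\gamma}_{2,L}$, Theorem~\ref{th:approx} with $p=q=2$ (and any fixed $r$), together with Remark~\ref{rem:adapt2} for the threshold choice~\eqref{eq:refineQ}, gives $\norm{\hat f_n - f}_{L^2}\le C(\log n/n)^{\gamma/(2\gamma+1)}$; inspecting that proof, the bound in fact holds with a \emph{deterministic} constant $C$ (depending only on $\eta$, $c$, $L$) on the good-noise event~\eqref{eq:goodNoise}, whose probability tends to $1$ uniformly in $f$. Chaining the two displays proves part a).

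\emph{Part b).} Here I would show that the asserted inequality holds \emph{deterministically} on the single event $E_n\coloneqq\{T_{\mathcal{I}}(\xi^n;0)\le\eta\}$. Fix $I\in\mathcal{I}_n\subseteq\mathcal{I}$ and write $\hat f_n\equiv\hat c_I$ on $I$. Because the endpoints of $I$ lie on the sampling grid, $\sum_{i/n\in I}\bar f_i^n = n\int_I f$, so $\abs{I}^{1/2}\abs{m_I(\hat f_n)-m_I(f)} = (n\abs{I}^{1/2})^{-1}\abs[b]{\sum_{i/n\in I}(\hat c_I-\bar f_i^n)}$. Writing $\hat c_I-\bar f_i^n = (\hat c_I-y_i^n)+\xi_i^n$ and using the triangle inequality, this is at most $n^{-1/2}$ times
\[
\frac{1}{\sqrt{n\abs{I}}}\abs[b]{\sum_{i/n\in I}(y_i^n-\hat c_I)} + \frac{1}{\sqrt{n\abs{I}}}\abs[b]{\sum_{i/n\in I}\xi_i^n}.
\]
The first term is $\le\eta+s_I$ \emph{unconditionally}, since $\hat f_n$ satisfies $T_{\mathcal{I}}(y^n;\hat f_n)\le\eta$ and $I$ is one of the intervals on which $\hat f_n$ is constant, hence one of the terms in that supremum; the second term is $\le\eta+s_I$ \emph{on $E_n$}, by the same reasoning applied to $T_{\mathcal{I}}(\xi^n;0)$, the null candidate $0$ being constant on every $I\in\mathcal{I}$. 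Thus on $E_n$ one has $\abs{I}^{1/2}\abs{m_I(\hat f_n)-m_I(f)}\le 2(\eta+s_I)/\sqrt n$ for all $I\in\mathcal{I}_n$ at once, which gives the probability lower bound $\Prob[a]{T_{\mathcal{I}}(\xi^n;0)\le\eta}$.

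It remains to note that $\Prob[a]{E_n}\to1$. For $\eta=a\sqrt{\log n}$ with $a>\delta+\sigma\sqrt{2r+4}$, bound $\Prob[a]{E_n^c}\le\Prob[b]{\sup_{I\in\mathcal{I}}(n\abs{I})^{-1/2}\abs[b]{\sum_{i/n\in I}\xi_i^n}>(a-\delta)\sqrt{\log n}}$ using $\abs{s_I}\le\delta\sqrt{\log n}$, and then the sub-Gaussian tail plus Boole's inequality behind~\eqref{eq:tailMRstat} (equivalently, Shao's theorem~\citep{Sha95}) give $\Prob[a]{E_n^c}\le 2n^{-r}$; for the refined threshold $\eta=\eta(\beta)$ with $\beta=\mathcal{O}(n^{-r})$ one has $\Prob[a]{E_n}\ge1-\beta\to1$ by the definition of the quantile. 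The only point calling for care in the whole argument is the bookkeeping of the penalty $s_I$ when it is allowed to depend on the candidate and the data: one must verify that the penalties entering $T_{\mathcal{I}}(y^n;\hat f_n)$ and $T_{\mathcal{I}}(\xi^n;0)$ are both controlled by the $s_I$ appearing in the conclusion, which is immediate for candidate-independent penalties (e.g.\ SMUCE's $s_I=\sqrt{2\log(e/\abs{I})}$ or the unpenalized case $s_I\equiv0$) and otherwise follows from $\sup_{I\in\mathcal{I}}\abs{s_I}\le\delta\sqrt{\log n}$. Everything else is Cauchy--Schwarz, the triangle inequality, and results already in hand.
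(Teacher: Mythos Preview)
Your proof is correct and essentially identical to the paper's: Part a) uses the same Cauchy--Schwarz reduction to the $L^2$-rate of Theorem~\ref{th:approx}, and Part b) is the same triangle-inequality argument on the event $\{T_{\mathcal{I}}(\xi^n;0)\le\eta\}$, the paper merely routing through the sample mean $(n|I|)^{-1}\sum_{i/n\in I}y_i^n$ rather than your equivalent decomposition $\hat c_I-\bar f_i^n=(\hat c_I-y_i^n)+\xi_i^n$. Your added remarks on $\Prob[a]{E_n}\to1$ and on candidate-dependent penalties $s_I$ are more explicit than the paper's treatment but not substantively different.
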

}

\begin{proof}{
Part a): Note that for each $I \in \mathcal{I}_n$, 
\begin{multline*}
\abs{I}^{1/2}\abs{m_I(\hat f_n) - m_I(f)} \le \frac{1}{{\abs{I}}^{1/2}}\int_{I} \abs{\hat f_n(x) - f(x)} dx \\
 \le \frac{1}{{\abs{I}}^{1/2}} \abs{I}^{1/2}\Bigl(\int_{I}\abs{\hat f_n(x) - f(x)}^2\Bigr)^{1/2} \le \norm{\hat f_n - f}_{L^2}. 
\end{multline*}
Then, the assertion follows from Theorem~\ref{th:approx}. }

{
Part b): Assume $T_{\mathcal{I}}(\xi^n; 0) \le \eta$. Then $T_{\mathcal{I}}(y^n; f) \le \eta$. Since $T_{\mathcal{I}}(y^n; \hat f_n) \le \eta$ by definition, we obtain for either $g = f$ or $g = \hat f_n$ 
$$
\abs{I}^{1/2}\abs[B]{m_I(g) -\frac{1}{n\abs{I}}\sum_{j/n \in I} y_j^n} \le s_I + \eta\qquad \text{ for any } I \in \mathcal{I}_n.
$$
Thus, by triangular inequality it holds $\abs{I}^{1/2}\abs[b]{m_I(\hat f_n) -m_I(f)} \le 2(s_I + \eta)$. This shows  
$$
\left\{T_{\mathcal{I}}(\xi^n; 0) \le \eta\right\} \subseteq \Bigl\{\abs{I}^{1/2}\abs{m_I(\hat f_n) - m_I(f)}\le\frac{2(\eta +s_I)}{n^{1/2}}\quad \text{ for all }I \in \mathcal{I}_n\bigr\}, 
$$
which shows the assertion. By the choice of $\eta$, it holds $\lim_{n\to\infty}\Prob[a]{T_{\mathcal{I}}(\xi^n; 0)} = 1$. }
\end{proof}

{
\emph{Part} (i): We select $\mathcal{I}_n$ as a fixed collection of intervals that capture the modes and troughs of $f$. That is, $\mathcal{I}_n \coloneqq \left\{I_1, \ldots, I_m\right\}$ for some $m$ such that $I_1 < I_2 < \cdots < I_m$ and $m_{I_1}(f) \neq m_{I_2}(f) \neq \cdots \neq m_{I_m}(f)$. By Lemma~\ref{lm:feature} a) we have
$$
\max\set[b]{\abs{I}^{1/2}\abs{m_I(\hat f_n) - m_I(f)}}{I \in \mathcal{I}_n} \to 0.
$$
It implies $m_{I_1}(\hat f_n) \neq m_{I_2}(\hat f_n) \neq \cdots \neq m_{I_m}(\hat f_n)$ for sufficiently large $n$, and thus~\eqref{eq:mt}.
}

{
Now we set $\mathcal{I}_n\coloneqq \set[b]{[x, x+\lambda_n^{\varepsilon}), [x -\lambda_n^{\varepsilon}, x)}{x \in J_{\varepsilon}(f)}$ with $$
\lambda_n^{\varepsilon} \coloneqq \min\bigl\{d\bigl(J(\hat f_n), J_\varepsilon(f)\bigr), \, \delta_n\bigr\}\qquad \text{ for some positive }  \delta_n\to 0\text{ arbitrarily slow.}
$$ 
For $x \in J_{\varepsilon}(f)$, note that $\hat f_n$ is constant on $[x  -\lambda_n^{\varepsilon}, x + \lambda_n^{\varepsilon})$, which in particular implies $m_{[x -\lambda_n^{\varepsilon}, x)}(\hat f_n) = m_{[x, x+\lambda_n^{\varepsilon})}(\hat f_n)$.  Moreover, as $\lambda_n^{\varepsilon} \to 0$, from the definition of $\Delta^\varepsilon_f$ and  $f \in \mathcal{D}$ it follows for sufficiently large $n$ 
\begin{equation}\label{eq:szjmp}
\abs[a]{m_{[x -\lambda_n^{\varepsilon}, x)}(f) - m_{[x, x+\lambda_n^{\varepsilon})}(f)} \ge \frac{1}{2}\Delta^\varepsilon_f\qquad \text{ for all }x \in J_{\varepsilon}(f).  
\end{equation}
We claim that for each $x \in J_{\varepsilon}(f)$ there exists $I_x = [x, x+\lambda^\varepsilon_n)$ or $[x-\lambda^\varepsilon_n, x)$ such that $\abs{m_{I_x} (f) - m_{I_x}(\hat f_n)} \ge \Delta^\varepsilon_f/4$. Otherwise, if $\abs{m_{I_x} (f) - m_{I_x}(\hat f_n)} < \Delta^\varepsilon_f/4$ holds for both $I_x = [x, x+\lambda^\varepsilon_n)$ and $[x-\lambda^\varepsilon_n, x)$, then it leads to $\abs[a]{m_{[x -\lambda_n^{\varepsilon}, x)}(f) - m_{[x, x+\lambda_n^{\varepsilon})}(f)} < \Delta^\varepsilon_f/2$, which contradicts with~\eqref{eq:szjmp}. Thus, by Lemma~\ref{lm:feature} a), it holds
$$
\lim_{n\to\infty}\Prob[a]{\frac{\Delta^\varepsilon_f}{4} \le \abs{m_{I_x} (f) - m_{I_x}(\hat f_n)} \le \frac{C}{\sqrt{\lambda_n^{\varepsilon}}}\Bigl(\frac{\log n}{n}\Bigr)^{\gamma/(2\gamma + 1)}\quad\text{ for all } x\in J_{\varepsilon}(f)} = 1.
$$
It implies $\lambda_n^{\varepsilon} \le 16C^2(\Delta^\varepsilon_f)^{-2}(\log n / n)^{2\gamma/(2\gamma+1)} $ almost surely, as $n\to \infty$. By letting $\delta_n\to 0$ slower than $(\log n / n)^{2\gamma/(2\gamma+1)}$, we obtain
$$
\lim_{n\to\infty}\Prob[g]{d\bigl(J(\hat f_n), J_\varepsilon(f)\bigr) \le \frac{16C^2}{(\Delta^\varepsilon_f)^2}\Bigl(\frac{\log n}{n}\Bigr)^{2\gamma/(2\gamma + 1)}}=1,
$$
and thus $\lim_{n\to\infty} \Prob[b]{\#J(\hat{f}_n) \ge \#J_\varepsilon(f)} = 1$. Therefore, \eqref{eq:jmp} holds.
}

{
\emph{Part} (ii): By Lemma~\ref{lm:feature} b), we have 
$$
\Prob[B]{\abs{I_i}^{1/2}\abs{m_{I_i}(\hat f_n) - m_{I_i}(f)}\le\frac{2(\eta +s_{I_i})}{n^{1/2}}\, \text{ for }i = 1, 2} \ge \Prob[a]{T_{\mathcal{I}}(\xi^n; 0) \le \eta(\beta)} \ge 1 -\beta.
$$
Note that $\abs{I_i}^{1/2}\abs{m_{I_i}(\hat f_n) - m_{I_i}(f)}\le\frac{2(\eta +s_{I_i})}{n^{1/2}}$ for $i = 1,2$ and~\eqref{eq:sft} imply
\begin{align*}
m_{I_1}(f) - m_{I_2}(f) \ge & \,m_{I_1}(\hat f_n)-m_{I_2}(\hat f_n)-\sum_{i = 1}^2\abs{m_{I_i}(\hat f_n) - m_{I_i}(f)} \\
>  &\,\frac{2\bigl(\thd(\beta)+s_{I_1}\bigr)}{\sqrt{n\abs{I_1}}} + \frac{2\bigl(\thd(\beta)+s_{I_2}\bigr)}{\sqrt{n\abs{I_2}}}-\sum_{i = 1}^2\abs{m_{I_i}(\hat f_n) - m_{I_i}(f)} \ge 0.
\end{align*}
This concludes the proof. \hfill\qedsymbol
}

\subsection{Proof of Theorem~\ref{th:oSeg}}\label{app:oSeg}
{For simplicity, we assume that the noise $\xi^n_i$ has homogeneous variance $\sigma^2_0$, since for the general case it is obvious to modify the following proof accordingly.} For every $\tau\equiv(\tau_0, \tau_1,\ldots,\tau_k) \in \Pi_n$, we define $\#\tau \coloneqq k$, and by elementary calculation obtain 
$$
\E{\norm{\hat f_{\tau, n} -f}^2_{L^2}} = \norm{s_{\tau}-f}_{L^2}^2 + \frac{\#\tau}{n}\sigma_0^2
$$
where $s_\tau$ is the best $L^2$-approximant of $f$ with change-points specified by $\tau$. Define $\tau_*\equiv\tau_*(n)\in\Pi_n$ such that 
$\E{\norm{\hat f_{\tau_*,n} -f}_{L^2}^2} = \inf_{\tau\in\Pi_n}\E{\norm{\hat f_{\tau,n} -f}_{L^2}^2}$. Now we claim that there exists a constant $C$ satisfying 
\begin{equation}\label{eq:claim}
\norm{s_{\tau_*} -f}^2_{L^2} \le C \frac{\#\tau_*}{n}\sigma_0^2 \qquad \text{for sufficiently large }n.
\end{equation} 

To prove the claim~\eqref{eq:claim}, we, anticipating contradiction, assume that 
$$
\limsup_{n\to\infty}\frac{n\norm{s_{\tau_*}-f}_{L^2}^2}{\#\tau_*\sigma_0^2} = \infty.
$$ 
One can choose $m\equiv m(n)$ such that $\limsup_{n\to\infty} n\norm{s_{\tau_*} -f}^2_{L^2}(m\#\tau_*\sigma_0^2)^{-1} = \infty$, and  $\lim_{n\to\infty}m = \infty$.  Define $\upsilon_*$ as $\norm{s_{\upsilon_*} - f}_{L^2} = \inf_{\upsilon \in U_{\tau_*,m}}\norm{s_{\upsilon} -f}_{L^2}$ with 
$$
U_{\tau_*,m}\coloneqq\set{\upsilon\in\Pi_n}{\upsilon\equiv(0,\upsilon^1_1,\ldots,\upsilon^1_m \equiv \tau_*^1, \ldots,\upsilon_1^k,\ldots,\upsilon_m^k\equiv\tau_*^k)\text{ if }\tau_* \equiv (0,\tau_*^1,\ldots,\tau_*^k)}.
$$
It follows from $m\to\infty$ and $f\in\A^\gamma_{2}\cap L^\infty$ for some $\gamma$ that $\norm{s_{\upsilon_*}-f}_{L^2}/\norm{s_{\tau_*}-f}_{L^2}\to 0$. Then we obtain
$$
\limsup_{n\to \infty}\frac{\E{\norm{\hat f_{\tau_*,n}-f}_{L^2}^2}}{\E{\norm{\hat f_{\upsilon_*,n}-f}_{L^2}^2}}\ge\limsup_{n\to\infty}\frac{\norm{s_{\tau_*} - f}_{L^2}^2}{\norm{s_{\upsilon_*} - f}_{L^2}^2 + m\#\tau_*\sigma_0^2/n} = \infty,
$$
which contradicts the definition of $\tau_*$.

Denote $L\coloneqq \norm{f}_{L^\infty}$. Similar to part a) in the proof of Theorem~\ref{th:approx}, one can construct a step function $\tilde s_{\tau_*}$, by adding another $\#\tau_* $ change-points to $s_{\tau_*}$ and later shifting all the change-points to the grid points $i/n$, such that $\# J(\tilde s_{\tau_*}) \le 2(\#\tau_*-1)$, $\norm{\tilde s_{\tau_*} - f}_{L^2}^2 \le 2\norm{s_{\tau_*} -f}_{L^2}^2 + 2n^{-1}\#\tau_*L^2$, and $\norm{(\tilde s_{\tau_*} -f)\Ind_I}^2_{L^2} \le 2(\#\tau_*)^{-1}\norm{s_{\tau_*} -f}_{L^2}^2 + 2n^{-1}L^2$ for each segment $I$ of $\tilde s_{\tau_*}$. 

Assume now the ``good noise'' case, namely, event $\mathcal{G}_n$ in \eqref{eq:goodNoise} holds true, and that $\thd$ is defined in~\eqref{eq:defQ}. Then we have for sufficiently large $n$,
\begin{align*}
T_{\mathcal{I}}(y^n; \tilde s_{\tau_*}) \le &  \sup_{\substack{I \in \mathcal{I} \\ \tilde s_{\tau_*} \equiv c_I \text{ on } I}} \frac{1}{\sqrt{n\abs{I}}} \abs[b]{\sum_{i/n \in I}( \bar f^n_i - c_I)} + \sup_{I \in \mathcal{I}} \frac{1}{\sqrt{n\abs{I}}} \abs[b]{\sum_{i/n \in I} (y_i^n - \bar f^n_i)} -s_I \\
\le & \sup_{\substack{I \in \mathcal{I} \\ \tilde s_{\tau_*} \equiv c_I \text{ on } I}} \sqrt{n}\norm{(f - \tilde s_{\tau_*})\Ind_I}_{L^2} + a_0 \sqrt{\log n} \\
\le & \sqrt{2n(\#\tau_*)^{-1}\norm{s_{\tau_*} -f}_{L^2}^2 + 2L^2} + a_0 \sqrt{\log n} \\
\le & \sqrt{2C\sigma_0^2 + 2L^2} + a_0 \sqrt{\log n}  \le a\sqrt{\log n},
\end{align*}
where $C$ is the constant in \eqref{eq:claim}. Again following similar lines of part a) in the proof of Theorem~\ref{th:approx}, one can obtain that 
$$
\norm{\hat f_n - \tilde s_{\tau_*}}_{L^2}^2 \le 32(a+\delta)^2 c\log n \frac{\#\tau_*}{n}(1+o(1)). 
$$
It further follows that 
\begin{align*}
\norm{\hat f_n -f}_{L2}^2 & \le  2\norm{f - \tilde s_{\tau_*}}_{L^2}^2 + 2\norm{\hat f_n - \tilde s_{\tau_*}}^2_{L^2}  \\
& \le 4\norm{s_{\tau_*}-f}_{L^2}^2 + 4L^2\frac{\tau_*}{n} + 64(a+\delta)^2 c\log n \frac{\#\tau_*}{n}(1+o(1)). 
\end{align*}
Thus, under event $\mathcal{G}_n$, we obtain for large enough $n$
$$
\norm{\hat f_n -f}^2_{L^2} \le \tilde C \log n \bigl( \norm{s_{\tau_*}-f}_{L^2}^2 + \frac{\#\tau_*}{n}\sigma_0^2\bigr) \le \tilde C \log n\E{\norm{\hat f_{\tau_*, n} -f}^2_{L^2}},
$$
where $\tilde C$ is a constant independent of $f$. 

The assertion of the theorem is then followed by applying the same technique as in the part b) of the proof of Theorem~\ref{th:approx}. Again, as in Theorem~\ref{th:approx}, the above argument remains valid if we set $\thd=\thd(\beta)$ as in~\eqref{eq:refineQ}.
\hfill\qedsymbol


\end{document}